\newcommand{\sil}[1]{}
\newtheorem{theoy}{Theorem}[section]
\newtheorem{propy}[theoy]{Proposition}
\newtheorem{lemy}[theoy]{Lemma}
\newtheorem{defy}[theoy]{Definition}
\newtheorem{remark}[theoy]{Remark}
\newenvironment{proof}{\textsc{proof:} }{\ensuremath{\Box}}
\numberwithin{equation}{section}
\newcommand{\Ref}[1]{(\ref{#1})}
\newcommand{\theor}[1]{Theorem~\ref{#1}}
\newcommand{\ds}{\displaystyle}
\newcommand{\hb}[1]{\hbox{ #1 }}
\newcommand{\N}{\ensuremath{\mathbb{N}}}
\newcommand{\R}{\ensuremath{\mathbb{R}}}
\newcommand{\Q}{\ensuremath{\mathbb{Q}}}
\newcommand{\dt}[1]{\frac{\hbox{d} #1}{\hbox{d}t  }}
\newcommand{\tr}{\ensuremath{\leftidx{^t}}}
\newcommand{\Gl}{\mathrm{GL}}
\newcommand{\Gle}{\mathfrak{gl}}
\newcommand{\Sp}{\mathrm{Sp}}
\newcommand{\Spe}{\mathfrak{sp}}
\newcommand{\ASp}{\mathcal{S}}
\newcommand{\ASpe}{\mathfrak{s}}
\newcommand{\gva}{\ensuremath{\mathcal{G}} }
\newcommand{\Ge}{\ensuremath{\mathfrak{g}} }
\newcommand{\zva}{\ensuremath{\mathcal{M}} }
\newcommand{\sva}[1][]{\ensuremath{\mathcal{P}^{#1}} }
\newcommand{\inv}{\ensuremath{\bar\iota}}
\newcommand{\XX}[1][]{\ensuremath{X^{#1}} }
\newcommand{\X}[1][0]{\ensuremath{X^{(\!#1\!)}} }
\newcommand{\x}[2]{\ensuremath{x^{(\!#1\!)}_{#2}} }
\newcommand{\jva}[1][]{\ensuremath{{\mathrm{J}}^{#1}}}
\newcommand{\D}[1][]{{\mathrm{D}}^{#1}}
\newcommand{\dd}{\hbox{d}}
\title{Lagrangian Curves in a 4-dimensional affine symplectic space}
\author{ Emilio Musso\thanks{
This research is partially supproted by MIUR (Italy) under the PRIN project
 {\it Varieta' reali e complesse : geometria, topologia e analisi
   armonica}}
\thanks{Author partially supported  by GNSAGA of INDAM.}
\\ Departement of Mathematical Sciences
\\ Politecnico di Torino, Italy \\
\texttt{emilio.musso@polito.it}
\and
Evelyne Hubert
\\ \textsc{galaad}
\\ INRIA M\'editerran\'ee, Sophia Antipolis, France \\
\texttt{evelyne.hubert@inria.fr}
}
\date{November 2013}
\begin{document}

\maketitle

\begin{abstract}
Lagrangian curves in $\R^4$ entertain intriguing relationships with
second order deformation
of plane curves under the special affine group and
null curves in a 3-dimensional Lorentzian space form.
We provide a natural affine
symplectic  frame for Lagrangian curves. It allows us to classify
Lagrangrian curves with constant symplectic curvatures, to construct a class of Lagrangian tori in $\R^4$
 and determine Lagrangian geodesics.\\[6pt]

\textsc{Keywords:}
Symplectic geometry; Lagrangian planes; Differential invariants;
Moving frame; Lie group actions. \\[6pt]

\textsc{Mathematics Subject Classification:} 53A55; 53A15; 53D12 .
\end{abstract}



\newpage

\section*{Introduction}
  \label{intro} 

The study of submanifolds in an affine symplectic space originated in
the work of S.S.~Chern and H.C.~Wang  \cite{CW}. The issue has however
remained silent for many years, before being taken up on several occasions in
 recent literature \cite{AD,D,KOT,MN3,MK,V}. The renewed interest
in this topic raises in connection with the modern approach to the
moving frame method \cite{KOT} and in investigations on integrable
evolutions of curves in affine spaces or in the Grassmannians of the
Lagrangian linear subspaces of $\R^{2n}$ \cite{MB1,MB2,V}. The
specific nature of the geometry of an affine symplectic space stems
from the fact that the linear symplectic group does not act
transitively on the Grassmannians of the linear subspaces of
$\R^{2n}$. Already in the case of curves, the phenomenology is rather
varied and depends on the typology of the osculating spaces along the
curve which can be symplectic, isotropic, coisotropic or
Lagrangian. This makes a non-trivial task the construction of a moving
frame that works effectively in all possible cases, or say for all
 linearly full curve. So far, in the literature
only the generic case (i.e. curves whose osculating spaces of even
order are symplectic) has been investigated, while those cases which
are more specific to the symplectic setting have not been examined.

In the present paper we focus on Lagrangian curves.
They are the curves whose osculating spaces of order $\le n$ are
isotropic. First we provide an appropriate moving frame for those curves.
It was claimed in \cite{KOT} that Lagrangian curves were singular and
required higher order moving frame. Such is not the case. We exhibit a
frame that is of minimal order for generic curves but specializes well
for Lagrangian curves.
Second, with the moving frame at hand, we investigate
 the geometry of some remarkable Lagrangian curves.
We first provide a classification of Lagrangian curves with constant
symplectic curvatures. We then  examine the notion of geodesics for
Lagrangian curves. It turns out to be a subclass of the Lagrangian
curves with constant curvatures.

 In our treatment we
only consider the four dimensional case. This choice is due to two
main reasons: minimize the computational complications when
constructing the moving frame and exploit the specificity of the
four-dimensional case. We shall indeed disclose some interesting interconnections with
the deformation problem \cite{Ca1,Gr,J} for plane curves and with the
conformal geometry of null curves in a $3$-dimensional
pseudo-Riemannian space form (Minkowski, de Sitter or anti-de
Sitter). This latter topic has its own significance in mathematics,
theoretical physics and biophysics
\cite{BCDGM,CGR,In-Lee,KP,MN1,MN2,MN4,N,NFS1,NFS2,NFS3,NMMK}.

The material is organized into five sections.
In Section~\ref{symplectic}
we recall basic facts on the affine symplectic geometry of
$\R^4$ and define the symplectic curvatures.
In Section~\ref{lagrangian} we study Lagrangian
curves. They are characterized by the fact that their
osculating planes are Lagrangian
subspaces of $\R^4$ and demonstrate intriguing connections to conformal
and affine geometry.
Lagrangian curves have a
natural parametrization  through which a notion of
symplectic length can be defined.
We then examine their link to the conformal geometry of the Grassmannian of the
Lagrangian vector subspaces of $\R^4$.
In Section~\ref{techno} we face the construction of a moving frame for linearly
full curves in $\R^4$. We briefly outline the moving frame technology
for parametrized submanifolds following
\cite{fels99,hubert09,hubert12,hubert13}. It is then  applied to build the moving frame
associated to a minimal-order section for the action of the affine
symplectic group on the fourth-order jet space of linearly full
parameterized curves of $\R^4$, regardless of the specific properties
of the osculating spaces. We will also examine the cross-section
corresponding to a Gram-Schmid process. Such a construction can be
extended to provide appropriate moving frame in any dimension.
In Section~\ref{remarkable} we classify Lagrangian curves with constant symplectic
curvatures and we show that, up to affine symplectic transformations,
closed Lagrangian curves with constant curvatures depend on a rational
parameter. Based on those closed curve we show how to construct
Lagrangian tori.
Finally, in Section~\ref{geodesics} we study the symplectic arc-length functional for
Lagrangian curves and we prove that its critical points form a subset
of the Lagrangian curves with constant symplectic curvatures.

\section{Curves in symplectic affine geometry}
  \label{symplectic} 

\subsection{Symplectic structure}

On $\R^{4}$ we consider the {\it standard symplectic form}
$$\Lambda(X,Y)=\tr{X}\cdot J\cdot Y = \sum_{a,b=1}^{4}  x_a\, J_{ab}\,y_b,\quad \forall X,Y\in \R^{4},$$
where
$$J=(J_{ab})= \begin{pmatrix} 0 & {I}_{2} \\   - {I}_{2} &
  0       \end{pmatrix},
\quad
X = \begin{pmatrix} x_1 \\ x_2 \\ x_3 \\ x_4 \end{pmatrix},
\quad
Y = \begin{pmatrix} y_1 \\ y_2 \\ y_3 \\ y_4 \end{pmatrix}.$$

The \emph{linear symplectic group}  $\Sp(4,\R) $ is the group of matrices that
preserve $\Lambda$:
$$\Sp(4,\R)=\{\left. A\in \Gl(4,\R) \;\right|\; \tr{A} \,J\, A = J\}.$$
It is a 10 dimensional Lie group.
The  semi-direct product $ \R^4 \rtimes \Sp(4,\R)  $ is the
 {\it affine symplectic  group}.
Its action on $\R^4$ is given by  \( (a,A) \star x = A x +a.\)
It admits the matrix representation:
\begin{equation}\label{ss}\ASp(4,\R) = \left\{ \left.
       \begin{pmatrix}   1 & 0 \\  a & A \\ \end{pmatrix} \in\Gl(5,\R)
   \;\right|\; a\in\R^4, A\in\Sp(4,\R)      \right\}.\end{equation}

The Lie algebras $\Spe(4,\R)$ and  $\ASpe(4,\R)$ of $\Sp(4,\R)$ and
$\ASp(4,\R)$ are respectively
\[ \Spe(4,\R)=\left\{\left. A\in \Gle (4,\R) \;\right|\; \tr{A}\cdot J+J\cdot A=0\right\}\]
and
\[ \ASpe(4,\R) =\left\{  \left.\begin{pmatrix} 0 & 0 \\ a &    A \end{pmatrix} \;\right|\; a \in\R^4,\,
  A\in\Spe(4,\R)\right\}.\]
We can deduce that
\[ \Spe(4,\R)  =\left\{ \left. \begin{pmatrix}  A & B \\ C &-\tr{A} \end{pmatrix}
\;\right|\;
  A,B,C\in \Gle(2,\R) \hb{and} \tr{B}=B, \tr{C}=C\right\}\]
so that a basis for $\Spe(4,\R)$ is provided by the matrices
$$\bar A^i_j= \begin{pmatrix} E_{ij} & 0 \\ 0 & -E_{ji}
              \end{pmatrix},\quad  i,j=1,2,$$
\begin{equation}\label{smat}
\bar B^i_i= \begin{pmatrix}     0 & E_{ii}\\        0 & 0
              \end{pmatrix},\quad
\bar C^i_i=   \begin{pmatrix}   0 & 0 \\ E_{ii}& 0
        \end{pmatrix},\quad i=1,2, \end{equation}
and
$$\bar B^1_2= \begin{pmatrix}     0 & E_{12}+E_{21} \\        0 & 0
              \end{pmatrix},\quad
\bar C^1_2=   \begin{pmatrix}   0 & 0 \\ E_{12}+E_{21} & 0
        \end{pmatrix},
$$
where $E_{ij}$ is the matrix having $1$ at
the position $(i,j)$  as the only nonzero entry.
Let ${A}_i^j$, $B_i^j$ and $C_i^j$ be the corresponding $5\times 5$
matrices in $\ASpe(4)$.
Together with the infinitesimal translations
\begin{equation}\label{tmat} T_1 = \left( \begin{array}{c|cccc} 0 & 0 & 0 & 0 & 0 \\ \hline
       1 & 0 & 0 & 0 & 0 \\ 0  & 0 & 0 & 0 & 0 \\ 0  & 0 & 0 & 0 &  0
      \\ 0  & 0 & 0 & 0 &
      0 \end{array}\right),\; \ldots , \;
T_4 = \left( \begin{array}{c|cccc} 0 & 0 & 0 & 0 & 0 \\  \hline
       0 & 0 & 0 & 0 & 0 \\ 0  & 0 & 0 & 0 & 0 \\ 0  & 0 & 0 & 0 &  0
      \\ 1  & 0 & 0 & 0 & 0\end{array}\right),
\end{equation}
they form a basis for $\mathfrak{s}(4,\R)$.
The corresponding infinitesimal generators of the action of
$\Sp(4,\R)$ on $\R^4$ are the vector fields:
$$\begin{array}{c}
\ds\widetilde{A}_i^j=x_j\frac{\partial}{\partial x_i}-x_{2+i}\frac{\partial}{\partial x_{2+j}},\quad i,j=1,2,\\
\ds\widetilde{B}_i^i=x_{2+i}\frac{\partial}{\partial x_{i}},\quad
      \widetilde{C}_i^i=x_i\frac{\partial}{\partial x_{2+i}},\quad i=1,2\\
\ds \widetilde{B}_1^2=x_{3}\frac{\partial}{\partial  x_{1}}+x_{4}\frac{\partial}{\partial x_{2}},\quad
      \widetilde{C}_1^2=x_1\frac{\partial}{\partial x_{3}}+x_2\frac{\partial}{\partial x_{4}}
\end{array}
$$
and
$$ \widetilde{T}_a=\frac{\partial}{\partial x_a},\quad a=1,...,4 .
$$

\paragraph{Structure in terms of frames:}
A basis $(E_1,E_2,E_{3},E_{4})$ of $\R^{4}$ is said to be symplectic if
$\Lambda(E_a,E_b)=J_{ab}$, for every $a,b=1,...,4$.
This is equivalent to the matrix $\mathbf{E}$ with column
vectors $E_1,E_2,E_3,E_4$ to belong to $\mathrm{Sp}(4,\R)$.

An {\it affine symplectic frame} $(p, {E})$ consists of a point
$p\in \R^{4}$, the origin, and a symplectic basis ${E}$.
The manifold of all these frames can be identified with $\ASp(4,\R)$, the matrix representation
of the affine symplectic group defined as (\ref{ss}). Differentiating the maps
$$
p  : (p,{E})\in \mathcal{S}(4,\R)\to p\in \R^{4},
\quad E_a : (p,{E})\in \mathcal{S}(4,\R)\to E_a\in \R^{4},
$$
and taking into account the identities $\Lambda(E_a,E_b)=J_{ab}$ we find
\begin{equation}\label{MC1}\begin{aligned}& dp= \tau^1E_1+\tau^2E_2+\tau^3E_3+\tau^4E_4,\\
& dE_j= \alpha_j^1 E_1+\alpha_j^2 E_2 + \beta_j ^1 E_{3}+\beta_j ^2 E_{4},\quad j=1,2,\\
& dE_{2+j}= \eta_j^1 E_1+\eta_j^2 E_2 - \alpha_i ^1 E_{3}- \alpha_i ^2 E_{4},\quad j=1,2,\\
\end{aligned}\end{equation}
where $\beta^i_j=\beta^j_i$ and $\eta^i_j=\eta^j_i$ for every $i,j=1,2$.
Note that
\begin{equation}\label{cobasis}\begin{aligned}
 (\tau^1,\tau^2,\tau^3,\tau^4,\alpha^1_1,\alpha^2_2,\alpha^1_2,
  \alpha^2_1,\beta^1_1,\beta^2_2,\beta^1_2,\eta^1_1,\eta^2_2,\eta^1_2)
\end{aligned}\end{equation}
is a basis for the vector space of the left-invariant 1-forms of
$\mathcal{S}(4,\R)$ that is dual to the infinitesimal generators
associated to
\[ \left(  T_1, T_2, T_3, T_4, A_1^1, A^2_2, A^1_2,
   A^2_1, B^1_1, B^2_2, B^1_2, C^1_1, C^2_2, C^1_2\right) \]
when the action of  the one dimensional group
 determined by $\frak{a}\in \ASpe$ is given  by $B\in \ASp \to B e^{t \frak{a}}$.

\paragraph{Structure in terms of the Maurer-Cartan form:}

Another way to organize the structural information is to consider the
left Maurer-Cartan form
\[ \hat\Omega = \begin{pmatrix} 1 & 0 \\ -A^{-1} a & A^{-1}  \end{pmatrix}
\begin{pmatrix} 0 & 0\\ \dd a & \dd  A \end{pmatrix} \]
that is the left invariant form on on $\ASp(4,\R)$ with values in
$\ASpe(4,\R)$.
Hence
\[ \hat\Omega =
\sum_{1\leq i,j\leq 2} \alpha_i^j \, {A}_i^j
+\sum_{1\leq i\leq j \leq 2} \beta_i^j \,{B}_i^j
+\sum_{1\leq i\leq j\leq 2} \eta_{i}^j\, {C}_i^j +
\sum_{a=1}^4\tau_a\,{T}_a
\]
that is
\[ \hat\Omega = \begin{pmatrix} 0 & 0 & 0 & 0 & 0 \\
\tau_1 & \alpha_1^1 & \alpha_1^2 & \beta_1^1 & \beta_1^2 \\
\tau_2 & \alpha_2^1 & \alpha_2^2 & \beta_1^2 & \beta_2^2 \\
\tau_3 & \eta_1^1 & \eta_1^2 & -\alpha_1^1 & -\alpha_2^1 \\
\tau_3 & \eta_1^2 & \eta_2^2 & -\alpha_1^2 & -\alpha_2^2
\end{pmatrix}.\]
The left Maurer-Cartan form satisfies the structure equation
\[ \dd \hat\Omega + \frac{1}{2}\left[\hat\Omega , \hat \Omega\right]=0,
\; \hbox{ that is }\; \dd \hat\Omega + \hat\Omega \wedge \hat\Omega =0\]
in the case of matrix groups.
It is an essential tool for the classification of manifolds.
\begin{theoy} \label{fundamental} \cite[Theorem 5.2 and 6.1]{sharpe97},
\cite[Theorem 1.6.10]{ivey03}
Let $\zva$ be a manifold endowed with a $\ASpe$-valued one-form $\tilde\Omega$
satisfying $\dd \tilde\Omega =-\tilde \Omega \wedge \tilde\Omega$. Then for any point
$z\in\zva$ there exists a neighborhood $U$ of $z$ and a map
$\tilde{\rho}: U \rightarrow \ASp$ s.t. $\tilde\rho^* \Omega = \tilde\Omega$. Any
two such maps $\tilde{\rho}_1$, $\tilde{\rho}_2$ satisfies
$\tilde\rho_1=g\cdot \tilde\rho_2$ for some fixed $g\in\ASp$.
\end{theoy}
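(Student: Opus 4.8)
The plan is to recognise this as the Cartan--Darboux integrability theorem and to prove it by a single application of the Frobenius theorem on the product manifold $\zva \times \ASp$. Write $\pi_1,\pi_2$ for the two projections and introduce the $\ASpe$-valued one-form
\[ \Theta = \pi_2^*\Omega - \pi_1^*\tilde\Omega \]
on $\zva \times \ASp$, where $\Omega$ denotes the left Maurer--Cartan form of $\ASp$. A smooth map $\tilde\rho : U \to \ASp$ satisfies $\tilde\rho^*\Omega = \tilde\Omega$ precisely when its graph $\{(z,\tilde\rho(z)) : z\in U\}$ is an integral manifold of the Pfaffian system $\Theta = 0$. Thus the whole statement reduces to producing, through each point, an integral leaf of $\Theta=0$ that is the graph of a map into $\ASp$.

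The crucial step is to verify that $\Theta=0$ is Frobenius integrable. Suppressing the pullbacks and writing $\Theta = \Omega - \tilde\Omega$ on the product, I would combine the structure equation $\dd\Omega = -\Omega\wedge\Omega$ of the Maurer--Cartan form (recalled just before the statement) with the hypothesis $\dd\tilde\Omega = -\tilde\Omega\wedge\tilde\Omega$, and substitute $\Omega = \Theta + \tilde\Omega$, to obtain
\[ \dd\Theta = -\Omega\wedge\Omega + \tilde\Omega\wedge\tilde\Omega = -\Theta\wedge\Theta - \Theta\wedge\tilde\Omega - \tilde\Omega\wedge\Theta . \]
Every term on the right carries a factor of $\Theta$, so $\dd\Theta \equiv 0$ modulo the differential ideal generated by the components of $\Theta$, which is exactly the involutivity condition. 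Moreover the $\dim\ASp$ scalar components of $\Theta$ are pointwise independent, since on the fibre directions they restrict to the components of $\Omega$, a coframe on $\ASp$; hence $\Theta=0$ cuts out a distribution of rank $\dim\zva$ everywhere transverse to the $\ASp$-fibres.

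By Frobenius the product is locally foliated by integral leaves of dimension $\dim\zva$, and transversality to the fibres makes $\pi_1$ a local diffeomorphism on each leaf. Consequently the leaf through a chosen point $(z,g_0)$ is, over a neighbourhood $U$ of $z$, the graph of a smooth map $\tilde\rho : U\to\ASp$, and the vanishing of $\Theta$ along this graph is exactly $\tilde\rho^*\Omega = \tilde\Omega$, giving existence. For uniqueness I would use that the fibrewise left translations $(z,g)\mapsto(z,hg)$ preserve $\Theta$, because $\Omega$ is left-invariant and $\pi_1^*\tilde\Omega$ does not involve the fibre; hence they permute the leaves. If $\tilde\rho_1,\tilde\rho_2$ both solve the equation on a connected $U$, set $h=\tilde\rho_1(z_0)\,\tilde\rho_2(z_0)^{-1}$ at one point $z_0$; the graph of $\tilde\rho_1$ and the left-$h$-translate of the graph of $\tilde\rho_2$ are then integral leaves sharing the point $(z_0,\tilde\rho_1(z_0))$, so they coincide over $U$, yielding $\tilde\rho_1 = h\cdot\tilde\rho_2$ with $h$ constant. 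The one genuine obstacle is the integrability verification above; once the structure equations collapse $\dd\Theta$ into the ideal generated by $\Theta$, the remainder is the standard Frobenius-and-graph argument.
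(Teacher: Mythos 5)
Your proof is correct and is exactly the standard Cartan--Darboux argument that the paper delegates to its citations (Sharpe; Ivey--Landsberg): the graph of a solution of $\dd\tilde\rho=\tilde\rho\,\tilde\Omega$ is an integral manifold of $\pi_2^*\Omega-\pi_1^*\tilde\Omega=0$ on $\zva\times\ASp$, whose involutivity follows from the two structure equations, and uniqueness up to a constant left translation follows from the left-invariance of $\Omega$. This matches the paper's own one-line description of the proof, so there is nothing to add.
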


The proof proceeds by showing the existence of a solution
 $\tilde\rho: \zva \rightarrow \ASp$ to the
differential system $\dd \tilde\rho = \tilde{\rho}\, \tilde\Omega$.

We will alternatively use the right Maurer-Cartan form
\[ \Omega =
\begin{pmatrix} 0 & 0\\ \dd a & \dd  A \end{pmatrix}
 \begin{pmatrix} 1 & 0 \\ -A^{-1} a & A^{-1}  \end{pmatrix}
\]
that satisfies the structure equation
$\dd \Omega = \Omega \wedge \Omega$.
This is motivated by the fact that the infinitesimal generators are
implicitly defined with a right invariant structure on the group \cite{olver:yellow}.

\subsection{Symplectic curvatures}

Let $\jva[k](\R,\R^{4})$ be the space of $k^{th}$-order jets of smooth
parameterized curves $\gamma : \R\to \R^4$,
with coordinates $\X[0]$, $\X[1],\dots, \X[k]$.
We can represent a point in $\jva[k]$ by  an ordered pair $(t,\mathrm{X})$, consisting of the independent variable $t\in \R$ and a $5\times (k+1)$ matrix
\[ \mathrm{X}=\begin{pmatrix}
     1 & 0 & \ldots & 0 \\
     \X[0] & \X[1] & \ldots & \X[k]
\end{pmatrix}\]
so that the action of  $\mathcal{S}(4,\R)$ is given by matrix
multiplication:
\[  \begin{pmatrix} 1 & 0 \\ a & A \end{pmatrix}
\begin{pmatrix}
     1 & 0 & \ldots & 0 \\
     \X[0] & \X[1] & \ldots & \X[k]
\end{pmatrix}
=
\begin{pmatrix}
     1 & 0 & \ldots & 0 \\
    A \X[0] +a & A\X[1] & \ldots & A \X[k]
\end{pmatrix}\]

The maps
$$\Lambda(\X[i],\X[j]) =-\x{i}{3}\x{j}{1}-\x{i}{4}\x{j}{2}+\x{i}{1}\x{j}{3}+\x{i}{2}\x{j}{4}$$
are differential invariants for the action of the affine symplectic group on the jet space $\jva[k]$.
We single out the \emph{symplectic curvatures} $\kappa_i = \Lambda\left(\X[i], \X[i+1]\right)$, for
$i\geq 1$.
Any other differential invariant can be expressed in terms of those. For instance:
\[
\Lambda(\X[1], \X[2])=\kappa_1,\quad
\Lambda(\X[1], \X[3]) = \kappa_1',\quad
\Lambda(\X[1],\X[4]) = \kappa_1''-\kappa_2,
\]\[
\Lambda(\X[1],\X[5]) = \kappa_1'''-2\kappa_2',\quad \Lambda(\X[2], \X[3]) = \kappa_2,
\quad  \Lambda(\X[2], \X[4]) = \kappa_2' ,
\]\[
\Lambda(\X[1], \X[5]) = \kappa_2'-\kappa_3,\quad \Lambda({\X[3]}, \X[4]) = \kappa_3 , \quad \Lambda(\X[3],\X[5]) = \kappa_3' . \]
We shall come across another differential invariant
\begin{equation} \label{phi}
\phi = \det(\X[1],\X[2],\X[3],\X[4])
=\kappa_2^2-\kappa_1\kappa_3+\kappa_1'\kappa_2'-\kappa_2\kappa_1''
.
\end{equation}

\cite{MB2,KOT,V} considered curves $\gamma: I \rightarrow \R^{4}$
s.t. $\kappa_1\circ j^h(\gamma) = \Lambda(\gamma',\,\gamma'') $  does not vanish anywhere on
$I$, where $j^h(\gamma)$ denotes the canonical lifting of $\gamma$ to any of the jet spaces $\jva[h]$, $h\ge 2$.
The present article focuses on the curves where $\kappa_1$ is
identically zero.

\section{Lagrangian curves}
    \label{lagrangian} 

In this section we define Lagrangian curves and show some of their
properties.
We make explicit their relevance to conformal
and affine geometry.
We first exhibit a natural parametrization based on which we can
define a symplectic arc length. We briefly discuss the standard conformal structure of the Grassmannian of
the oriented Lagrangian planes of $\R^4$ (we refer to
\cite{BCDGM},\cite{GSkepler} for more details). Subsequently we define
the osculating curve and the phase portraits of a Lagrangian curve. We
prove that the osculating curve is a null-curve in the Grassmannian of
Lagrangian planes and we observe that any such curve satisfying some
mild conditions arises in this fashion. This fact highlights the links between Lagrangian curves
in $\R^4$ with null curves in a $3$-dimensional Lorentzian space form.
Secondly, we characterize
Lagrangian curves in terms of the behavior of the phase portraits.
We prove that the phase portraits of a Lagrangian curve are second
order deformations of each other with respect to the action of the
special affine group. The converse is also true, provided that the two
phase portraits do not have inflection points.

\subsection{Definition and parametrisation}

\begin{defy}
   A smooth parameterized curve $\gamma : I\to \R^{4}$ is
    said to be {\it Lagrangian} if $\gamma'(t)$ and
    $\gamma''(t)$ are linearly independent and
    $\Lambda (\gamma'(t),\gamma''(t))=0$.
\end{defy}

The definition does not depend on the parametrization,
    and  is invariant by symplectic transformations and
   dilations.
Indeed,
    if $h:J\to I$ is a change of parameters,
    $\Phi:\R^4\to \R^4$ is a symplectic transformation,
    $r$ is a non-zero real number and $\gamma:I\to \R^4$ is a Lagrangian curve,
    then $\widetilde{\gamma}: t \in J \to r\Phi[\gamma(h(t))]\in
    \R^4$ is another Lagrangian curve. More generally, if $\gamma:I\to
    \R^4$ is a Lagrangian curve and if $r:I\to \R$ is a nowhere
    vanishing smooth function, then integrating $r \gamma'$ we
    obtain another Lagrangian curve $\tilde{\gamma}:I\to \R^4$ such
    that the tangent lines of $\gamma$ and $\tilde{\gamma}$ at
    $\gamma(t)$ and $\tilde{\gamma}(t)$ are parallel to each other, for
    every $t\in I$.

\begin{defy}
A Lagrangian curve is {\it non-degenerate} if
$\Lambda(\gamma''(t),\gamma'''(t))\neq 0$,
for every $t\in I$ and {\it linearly full} if
$\gamma'|_t\wedge ... \wedge \gamma^{(iv)}|_t\neq 0$,
 for every $t\in I$.
\end{defy}

With (\ref{phi}) we see that a linearly full Lagrangian curve is automatically non-degenerate.

If $\gamma$ is Lagrangian and non-degenerate, then the differential $1$-form
$$\sigma_{\gamma} = \Lambda(\gamma'',\gamma''')^{1/5}dt=
(\tr\gamma''\, J\, \gamma''')^{1/5}dt,$$ is nowhere
vanishing. Furthermore, it is independent of the parametrization and
invariant by the action of the affine symplectic group. Therefore, we
can define an intrinsic orientation on a non-degenerate Lagrangian
curve requiring that $\Lambda(\gamma''|_t,\gamma'''|_t)>0$, for every
$t\in I$. From now on a Lagrangian curve is equipped with
the intrinsic orientation.

\begin{defy}Let $\gamma$ be a non-degenerate Lagrangian curve, the
    differential form $\sigma_{\gamma}$ is said to be the {\it
      symplectic arc-element} of $\gamma$.
If
    $\Lambda(\gamma'',\gamma''')^{1/5}=1$ (i.e. if
    $\sigma_{\gamma}=dt$), $\gamma$ is said to be  {\it parameterized by the
      symplectic arc-length}.
\end{defy}

The symplectic arc-length parameter will be denoted by $s$.
Obviously, the symplectic arc-length parametrization
   is unique up to a shift $s\to s+s_0$ of the
   parameter.
If $[a,b]\subset I$ is a closed interval,    the quantity
$$ \int_a^b \sigma_{\gamma}$$
is the {\it symplectic length} of the Lagrangian arc $\gamma([a,b])$. Using standard arguments one can easily prove the following Proposition :

\begin{propy}
    If $\gamma:I\to \R^4$ is a non-degenerate Lagrangian
    curve equipped with its intrinsic orientation, then there exists a
    strictly increasing surjective map $h:J\subset \R\to I$ such that
    $\gamma\circ h$ is a parametrization by
      symplectic arc-length.
\end{propy}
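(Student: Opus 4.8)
The plan is to construct the reparametrization explicitly, by integrating the symplectic arc-element and inverting. Since $\gamma$ is non-degenerate and carries its intrinsic orientation, the function $f(t)=\Lambda(\gamma''(t),\gamma'''(t))^{1/5}$ is smooth and strictly positive on $I$; this is exactly the condition making $\sigma_{\gamma}=f(t)\,dt$ a well-defined, nowhere-vanishing $1$-form compatible with the orientation. Fixing a basepoint $t_0\in I$, I would set $s(t)=\int_{t_0}^{t} f(u)\,du$, the primitive of $f$ vanishing at $t_0$.

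Next I would record the elementary analytic facts about $s$. Because $s'(t)=f(t)>0$ for every $t\in I$, the map $s$ is strictly increasing, hence injective, and by the inverse function theorem it is a diffeomorphism of $I$ onto its image $J:=s(I)$. As $I$ is an interval and $s$ is continuous and strictly monotone, $J\subset\R$ is again an interval, and the inverse $h:=s^{-1}:J\to I$ is a strictly increasing, smooth, surjective map onto $I$. This produces the candidate $h$ asserted in the statement.

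It then remains to check that $\gamma\circ h$ is parametrized by symplectic arc-length. The cleanest route is to invoke the reparametrization-invariance of the symplectic arc-element recorded above, namely $\sigma_{\gamma\circ h}=h^{*}\sigma_{\gamma}$. Writing $s$ for the coordinate on $J$ and using $h'(s)=1/s'(h(s))=1/f(h(s))$, one gets $h^{*}\sigma_{\gamma}=f(h(s))\,h'(s)\,ds=ds$, so that $\Lambda\big((\gamma\circ h)'',(\gamma\circ h)'''\big)^{1/5}=1$, which is precisely the defining property of a symplectic arc-length parametrization.

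The argument is essentially routine, so there is no serious obstacle; the one point deserving care is the covariance $\sigma_{\gamma\circ h}=h^{*}\sigma_{\gamma}$ under orientation-preserving reparametrizations. Should one wish to verify it by hand rather than cite it, one expands $(\gamma\circ h)''=\gamma''(h')^{2}+\gamma'h''$ and $(\gamma\circ h)'''=\gamma'''(h')^{3}+3\gamma''h'h''+\gamma'h'''$ and evaluates $\Lambda\big((\gamma\circ h)'',(\gamma\circ h)'''\big)$ by bilinearity and antisymmetry. Here the Lagrangian hypothesis enters decisively: the terms carrying a factor $\Lambda(\gamma',\gamma'')$ vanish because $\Lambda(\gamma',\gamma'')=\kappa_1\equiv 0$, those carrying $\Lambda(\gamma',\gamma''')$ vanish because $\Lambda(\gamma',\gamma''')=\kappa_1'\equiv 0$, and all remaining off-diagonal terms vanish by antisymmetry. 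What survives is $\Lambda(\gamma'',\gamma''')(h')^{5}$, whence $\Lambda\big((\gamma\circ h)'',(\gamma\circ h)'''\big)^{1/5}=f(h)\,h'$, confirming the covariance and closing the proof.
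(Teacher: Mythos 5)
Your proof is correct and is precisely the ``standard argument'' the paper invokes without writing out: integrate the positive density $\Lambda(\gamma'',\gamma''')^{1/5}$ to get a strictly increasing diffeomorphism onto an interval $J$, invert it, and use the covariance $\sigma_{\gamma\circ h}=h^{*}\sigma_{\gamma}$ (which you rightly justify via $\Lambda(\gamma',\gamma'')=\Lambda(\gamma',\gamma''')=0$) to conclude. Nothing is missing.
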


\subsection{Osculating curves}

Here we discuss the links between Lagrangian curves in $\R^4$ and null
curves in a $3$-dimensional Lorentzian space form. Since the notion of
null curve is invariant by conformal transformations, the natural
environment is the conformal compactification of the Minkowski
$3$-space, which can be thought of as the manifold of all oriented
Lagrangian vector subspaces of $\R^4$. Such a link between symplectic
and conformal geometry is specific to the four-dimensional case.The
reason lies in the fact that $\mathrm{Sp}(4,\R)$ is a covering group
of the connected component of the identity of $\mathrm{O}(3,2)$. We
begin with a brief description of the conformal structure of the
Grassmannian of oriented Lagrangian planes in $\R^4$. Then we
introduce the concept of osculating curve by which we establish the
correlations among Lagrangian and null curves.

\begin{defy} An oriented {\it Lagrangian plane} is a two dimensional linear subspace $L
\subset \R^4$ such that $\Lambda_{| {L}}=0$. The Grassmannian $\Lambda_+^2$ of all such planes
is a smooth manifold diffeomorphic to $S^2\times S^1$ (see \cite{BCDGM,MN3}).\end{defy}

The action of $\Sp(4,\R)$ on $\R^4$ induces an action on $\Lambda^2_+$
which is transitive. The projection map
\begin{equation}\label{projection}
  \pi_{\Lambda}:\mathbf{E}\in \Sp(4,\R)\to [E_1\wedge E_2]\in
  \Lambda^2_+
    \end{equation}
makes $\Sp(4,\R)$ into a principal fiber bundle
with structure group
$$
\Sp(4,\R)_1 = \left\{ \mathbf{X}(A,b)=\left(
      \begin{array}{cc}
         A & A\cdot b \\
           0 & ^t A^{-1} \\
            \end{array}
             \right) \, : \,  \det A > 0,\, b\in \mathrm{S}(2,\R) \right\}.
$$
From this, it follows that the $1$-forms
$(\eta^1_1,\eta_2^2,\eta^2_1)$ span the semi-basic
forms\footnote{An
exterior differential form is semi-basic if it annihilates the vertical vectors of the fibration} of the
projection $\pi_{\Lambda}$. Moreover, the symmetric quadratic form
${g}=-\eta^1_1\eta^2_2+(\eta^2_1)^2$ and the exterior 3-form
$\eta^1_1\wedge \eta^2_1\wedge \eta^2_2$ are well defined on
$\Lambda^2_+$, up to a positive multiple. They determine a conformal
structure of signature $(2,1)$ and an orientation, respectively.

\begin{remark} The 3-dimensional Minkowski space $\R^{2,1}$ can be
  identified with the vector space $\mathrm{Q}(2,\R)$ of $2\times 2$
  symmetric matrices equipped with the non-degenerate inner product of
  signature $(2,1)$ induced by the quadratic form
$$B\in \mathrm{Q}(2,\R) \to -\mathrm{det}(B)\in \R.$$
For each $B\in \mathrm{Q}(2,\R)$ we associate the oriented Lagrangian
plane $L(B)$ spanned by the vectors $(1,0,B_1^1,B_1^2)$ and
$(0,1,B_1^2,B_2^2)$. It is easy to check that the map
$$B\in \mathrm{Q}(2,\R)\to L(B)\in \Lambda^2_+$$
is a conformal embedding. This shows that $\Lambda^2_+$ can be viewed as the {\it
conformal compactification} of the
$3$-dimensional Minkowski space (cf. \cite{BCDGM},\cite{GSkepler}).
\end{remark}

\begin{defy}
The osculating spaces to a non-degenerate Lagrangian
    curve $\gamma$ define a smooth map
$$\delta_{\gamma}:t\in I \to [\gamma'(t)\wedge \gamma''(t)]\in \Lambda^2_+$$
to the manifold of the oriented Lagrangian planes. We say that
$\delta$ is the {\it osculating curve} of $\gamma$.
\end{defy}

\begin{propy}{Two non-degenerate Lagrangian curves $\gamma$ and $\tilde{\gamma}$
have the same osculating curve if and only if they have parallel
tangent lines.}\end{propy}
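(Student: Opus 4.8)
The plan is to prove both implications directly from the definition $\delta_\gamma(t)=[\gamma'(t)\wedge\gamma''(t)]$, working at each fixed parameter $t$ (I take the two curves to be defined on the same interval $I$, so that osculating planes and tangent lines are compared at equal parameter values). The one preliminary fact I would establish first is that, for a non-degenerate Lagrangian curve, the vectors $\gamma',\gamma'',\gamma'''$ are linearly independent; more precisely, that $\gamma'''(t)\notin L(t):=\span\{\gamma'(t),\gamma''(t)\}$. Indeed, were $\gamma'''=c\,\gamma'+d\,\gamma''$ for some scalars, then using $\Lambda(\gamma'',\gamma')=-\Lambda(\gamma',\gamma'')=0$ and $\Lambda(\gamma'',\gamma'')=0$ one would get $\Lambda(\gamma'',\gamma''')=0$, contradicting non-degeneracy. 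This transversality of $\gamma'''$ to the osculating plane is the lever for the whole argument.

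For the implication ``parallel tangent lines $\Rightarrow$ same osculating curve'', suppose $\tilde\gamma'(t)=r(t)\,\gamma'(t)$ with $r$ nowhere vanishing. Differentiating gives $\tilde\gamma''=r'\gamma'+r\gamma''$, whence $\tilde\gamma'\wedge\tilde\gamma''=r^2\,\gamma'\wedge\gamma''$. Since $r^2>0$ this is a positive multiple, so $[\tilde\gamma'\wedge\tilde\gamma'']=[\gamma'\wedge\gamma'']$ as oriented Lagrangian planes, that is $\delta_{\tilde\gamma}=\delta_\gamma$. (That $\tilde\gamma$ remains Lagrangian and non-degenerate follows from the bilinearity and alternation of $\Lambda$, as already noted in the text.)

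The substantive direction is the converse. Assuming $\delta_{\tilde\gamma}(t)=\delta_\gamma(t)$, the two osculating planes coincide, in particular as unoriented subspaces, so $\span\{\gamma',\gamma''\}=\span\{\tilde\gamma',\tilde\gamma''\}=L(t)$. In particular $\tilde\gamma'(t)\in L(t)$, and I may write $\tilde\gamma'=a\,\gamma'+b\,\gamma''$ with smooth functions $a,b$. Differentiating, $\tilde\gamma''=a'\gamma'+(a+b')\gamma''+b\,\gamma'''$. But $\tilde\gamma''$ also lies in $L(t)=\span\{\gamma',\gamma''\}$, and by the preliminary step $\gamma',\gamma'',\gamma'''$ are independent with $\gamma'''$ transverse to $L(t)$; hence the $\gamma'''$-coefficient must vanish, forcing $b\equiv 0$. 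Then $\tilde\gamma'=a\,\gamma'$ with $a$ nowhere zero (as $\tilde\gamma'$ never vanishes, being part of an independent pair), which is exactly the assertion that the tangent lines of $\gamma$ and $\tilde\gamma$ are parallel.

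The only delicate point, and the one I would treat with care, is the preliminary transversality claim: it is precisely the non-degeneracy hypothesis $\Lambda(\gamma'',\gamma''')\neq 0$ that prevents $\gamma'''$ from collapsing into the osculating plane, and without it the forcing $b\equiv 0$ would break down. Everything else reduces to a single differentiation and a comparison of coefficients in the basis $\{\gamma',\gamma'',\gamma'''\}$.
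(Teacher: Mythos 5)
Your proof is correct. The forward implication (parallel tangents imply equal osculating curves) is identical to the paper's. In the substantive converse direction you take a genuinely different, purely linear-algebraic route: you first show that non-degeneracy forces $\gamma'''(t)\notin\span\{\gamma'(t),\gamma''(t)\}$, then expand $\tilde\gamma'=a\gamma'+b\gamma''$, differentiate, and kill the coefficient $b$ of $\gamma'''$ by observing that $\tilde\gamma''$ must also lie in the common osculating plane. The paper instead expands $\gamma'=r_1\tilde\gamma'+r_2\tilde\gamma''$, differentiates once, and evaluates the symplectic form: the Lagrangian condition $\Lambda(\gamma',\gamma'')=0$ collapses to $r_2^2\,\Lambda(\tilde\gamma'',\tilde\gamma''')=0$, so $r_2=0$ by non-degeneracy. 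The paper's computation is slightly leaner in its hypotheses --- it only needs the tangent of one curve to lie in the osculating plane of the other, plus the Lagrangian identity $\Lambda(\tilde\gamma',\tilde\gamma''')=0$ obtained by differentiation --- whereas yours uses the full coincidence of the two planes at second order; in exchange, your transversality lemma ($\gamma',\gamma'',\gamma'''$ independent for a non-degenerate Lagrangian curve) isolates cleanly where non-degeneracy enters and is a reusable fact (it is essentially the statement that linearly full implies non-degenerate, read backwards). Both arguments are complete; the minor point common to both, which you handle implicitly, is the smoothness of the coefficient functions, guaranteed because $\gamma',\gamma''$ form a smoothly varying basis of the plane.
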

\begin{proof}{If $\gamma,\tilde{\gamma}:I\to \R^4$ have the same osculating curves, then
$\gamma' = r_1\tilde{\gamma}'+r_2\tilde{\gamma}''$, where $r_1,r_2:I\to \R$ are suitable smooth functions. Then, the acceleration of $\gamma$ is given by
$$\gamma''=r_1'\tilde{\gamma}'+(r_1+r_2')\tilde{\gamma}''+r_2\tilde{\gamma}'''.$$
Differentiating $\Lambda(\tilde{\gamma}',\tilde{\gamma}'')=0$ we get $\Lambda(\tilde{\gamma}',\tilde{\gamma}''')=0$. This implies
$$0=\Lambda(\gamma',\gamma'')=r_2^2\Lambda(\tilde{\gamma}'',\tilde{\gamma}''').$$
Bearing in mind that $\tilde{\gamma}$ is non-degenerate, it follows that $r_2=0$. Therefore $\gamma' = r_1\tilde{\gamma}'$, i.e. $\gamma$ and $\tilde{\gamma}$ have parallel tangent lines. Conversely, if $\gamma$ and $\tilde{\gamma}$ have parallel tangent lines, then $\gamma'=r\tilde{\gamma}'$, where $r:I\to \R$ is a smooth function everywhere different from zero. Therefore we have $\gamma'\wedge \gamma''= r^2 \tilde{\gamma}'\wedge \tilde{\gamma}''$, from which it follows that $\delta_{\gamma}=\delta_{\tilde{\gamma}}$.}\end{proof}

\begin{propy}{Let $\gamma:I\to \R^4$ be a non-degenerate Lagrangian curve. Then, $\delta_{\gamma}$ is a null-curve\footnote{ A smooth immersed curve $\delta :I \to \Lambda_+^2$ is {\it null} if its tangent vectors are isotropic (null) with respect to the conformal structure of $\Lambda_+^2$} of $\Lambda^2_+$.}\end{propy}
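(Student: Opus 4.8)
The plan is to read off nullity in the affine chart of $\Lambda^2_+$ supplied by the Remark. There $\Lambda^2_+$ is identified with the space $\mathrm{Q}(2,\R)$ of symmetric $2\times2$ matrices $B$, the plane $L(B)$ being spanned by $(1,0,B^1_1,B^1_2)$ and $(0,1,B^1_2,B^2_2)$, and the induced conformal structure is a positive multiple of the flat metric whose quadratic form on a tangent vector $\dot B$ is $-\det\dot B=-\dot B^1_1\dot B^2_2+(\dot B^1_2)^2$. Thus the velocity of a curve $t\mapsto B(t)$ is null exactly when $\det B'(t)=0$, and the whole problem reduces to showing that the velocity of the osculating curve, expressed in this chart, is a \emph{singular} symmetric matrix.

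First I would reduce to the case where $\delta_\gamma(t_0)=[\gamma'(t_0)\wedge\gamma''(t_0)]$ lies in this chart, i.e. is a graph over the $(x_1,x_2)$--plane. Since $\Sp(4,\R)$ preserves the conformal structure of $\Lambda^2_+$ and acts transitively on it, a fixed symplectic transformation $\Phi$ carries $\delta_\gamma(t_0)$ to $[E_1\wedge E_2]=L(0)$; the curve $\Phi\circ\gamma$ is again Lagrangian and non-degenerate, its osculating curve is $\Phi\cdot\delta_\gamma$, and nullity at $t_0$ is unaffected because ``null'' is a scale-invariant notion. By continuity $\Phi(\delta_\gamma(t))$ stays a graph for $t$ near $t_0$, so we may assume $\delta_\gamma(t)=L(B(t))$ with $B$ smooth near $t_0$.

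The heart of the argument is a short differentiation. Writing $\pi_{12},\pi_{34}$ for the projections onto the first and last pairs of coordinates and using $\Lambda(X,Y)=\pi_{12}(X)\cdot\pi_{34}(Y)-\pi_{34}(X)\cdot\pi_{12}(Y)$, I set $(p,q):=\pi_{12}(\gamma')$, which is nonzero because $\pi_{12}$ is injective on the graph $L(B(t))$ and $\gamma'\neq0$. From $\gamma'\in L(B)$ one gets $\pi_{34}(\gamma')=(p,q)\,B$, while differentiating $\pi_{12}(\gamma')=(p,q)$ gives $\pi_{12}(\gamma'')=(p',q')$ and hence $\pi_{34}(\gamma'')=(p',q')\,B$ (as $\gamma''\in L(B)$ too). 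Differentiating $\pi_{34}(\gamma')=(p,q)\,B$ once more and comparing with $\pi_{34}(\gamma'')=(p',q')\,B$ yields the key identity $(p,q)\,B'=0$. Since $B'$ is symmetric with the nonzero vector $(p,q)$ in its kernel, it is singular, so $\det B'=0$ and the velocity of $\delta_\gamma$ is null.

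Finally, to confirm that $\delta_\gamma$ is a genuine immersion (so that ``null curve'' is meaningful), I would note that the same computation gives $\Lambda(\gamma'',\gamma''')=(p',q')\,B'\,{}^{t}(p',q')$ after the symmetric terms involving $B$ cancel. Non-degeneracy $\Lambda(\gamma'',\gamma''')\neq0$ then forces $B'\neq0$, so $\delta_\gamma'$ is a nonzero null vector. The only real obstacle is the bookkeeping of the chart and, above all, the legitimacy of the reduction to the graph case: one must use that the conformal structure is genuinely $\Sp(4,\R)$-invariant (up to positive scale), which is precisely what makes the scale-invariant notion ``null'' transport under $\Phi$.
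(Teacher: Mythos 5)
Your proof is correct, but it takes a genuinely different route from the paper's. The paper lifts $\delta_\gamma$ to a second-order symplectic moving frame $\mathbf{E}=(\gamma',\gamma'',E_3,E_4):I\to\Sp(4,\R)$ and reads nullity off the pulled-back Maurer--Cartan forms $\eta^i_j$ that span the semi-basic forms of the fibration $\pi_\Lambda$: since $dE_1=E_2\,dt$ has no $E_3,E_4$ component, the matrix $(c^i_j)$ collapses to $\mathrm{diag}(0,c^2_2)$ with $c^2_2=\Lambda(\gamma'',\gamma''')$ nowhere zero, whence $c^1_1c^2_2-(c^2_1)^2=0$ and the lift detects a nonzero null velocity. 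You instead work in the affine Minkowski chart $B\mapsto L(B)$ of the Remark, reduce to the graph case by transitivity of $\Sp(4,\R)$ on $\Lambda^2_+$ and invariance of the conformal class, and derive the identity $(p,q)\,B'=0$ with $(p,q)=\pi_{12}(\gamma')\neq 0$, so that the symmetric matrix $B'$ is singular and $-\det B'=0$; non-degeneracy then gives $B'\neq 0$ via $\Lambda(\gamma'',\gamma''')=(p',q')\,B'\,{}^{t}(p',q')$. I checked both identities and they are correct (the terms involving $B$ cancel by symmetry), and the reduction step is legitimately justified. The paper's computation is chart-free and meshes with the moving-frame formalism used throughout; your argument is more elementary and makes the geometric mechanism transparent --- the velocity $B'$ of the osculating curve is null precisely because the tangent direction $\pi_{12}(\gamma')$ sits in its kernel --- at the modest cost of the bookkeeping needed to place the curve in the chart.
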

\begin{proof}{To check if $\delta:I\to \Lambda^2_+$ is a null-curve we choose a lift of $\delta$ to $\mathrm{Sp}(4,\R)$, i.e. any map $\mathbf{E}=(E_1,E_2,E_3,E_4):I \to \mathrm{Sp}(4,\R)$ such that $\delta=[E_1\wedge E_2]$. Subsequently we compute the pull-back of the Mauer-Cartan forms $\eta^1_1,\eta^2_2,\eta^2_1$ and we write $\mathbf{E}^*(\eta^i_j)=c^i_jdt$, where $c^i_j:I\to \R$ are smooth functions. Then, $\delta$ is a null-curve if and only if $c^1_1c^2_2-(c^2_1)^2=0$ and $(c^1_1)^2+(c^2_2)+(c^2_1)^2$ is nowhere vanishing.
\noindent If $\gamma$ is a non-degenerate Lagrangian curve, we consider a {\it second order moving frame} along $\gamma$, that is a smooth map
$$\mathbf{E}=(E_1,E_2,E_3,E_4) :I\to \mathrm{Sp}(4,\R)$$
such that $\gamma'(t)=E_1(t)$ and $\gamma''(t)=E_2(t)$ for every $t\in I$. We then have
$$\left(
    \begin{array}{cc}
      \mathbf{E}^*(\eta^1_1) & \mathbf{E}^*(\eta^2_1) \\
      \mathbf{E}^*(\eta^2_1) & \mathbf{E}*(\eta^2_2)\\
    \end{array}
  \right)=\left(
            \begin{array}{cc}
              0 & 0 \\
              0 & c^2_2 \\
            \end{array}
          \right)dt,$$
where $c^2_2$ is a nowhere vanishing smooth function. On the other hand $\mathbf{E}$ is a lift of the osculating curve $\delta_{\gamma}$ satisfying
$$c^1_1c^2_2-(c^2_1)^2=0,\quad (c^1_1)^2+(c^2_2)+(c^2_1)^2=c_2^2>0.$$ This yields the required result.}\end{proof}

It is furthermore not difficult to prove that, under a mild generic
condition, any null curve of $\Lambda_+^2$ arises as the osculating
curve of a non-degenerate Lagrangian curve of $\R^4$.

\subsection{Phase portraits} \label{dpairs}

We wish to show here the relation between Lagrangian curves and second
order deformation of plane curves under the special affine group.
We first recall  the classical notion of deformation of a plane curve, with
fixed parametrisation,  with respect to a group of transformations.

\begin{defy}
Let $\gva$ be a Lie group and
$(g,\mathbf{x})\in \gva\times \R^2\to g\star \mathbf{x}\in \R^2$
be a (left) action of $\gva$ on $\R^2$.
Two plane curves $\mathbf{a}, \mathbf{b}:I \to \R^2$ are said to be
{\it $k$-th order deformations}
of each other with respect to $\gva$ if there exists a smooth map
$g : I\to \gva$ such that $\mathbf{a}$ and $g(t)\star \mathbf{b}$
have the same $k$-th order jets at $t$, for every $t\in I$.
\end{defy}

If $\gva$ is the Euclidean group of rigid motions,
two curves are first order deformations each other if and only if they
have the same speed,
while second order deformation implies the congruence of the two curves
We shall consider the larger  {\it special affine group}, i.e. the
semi-direct product of special linear group $\mathrm{SL}(2,\R)$
with the group of the translations.

\begin{defy}
The \emph{phase portraits} of a
 curve $\gamma:I\to \R^4$
are the  plane curves
$\mathbf{a}_{\gamma},\,\mathbf{b}_{\gamma}: \, I\to \R^2$
defined by
$\mathbf{a}_{\gamma} = (\gamma_1,\gamma_3)$,
$\mathbf{b}_{\gamma}=(-\gamma_2,\gamma_4)$.

Conversely, given two plane curve $\mathbf{a},\mathbf{b}: I\to \R^2$ we
denote by $\gamma_{(\mathbf{a},\mathbf{b})}:I\to \R^4$  the curve
defined by  $\gamma_{(\mathbf{a},\mathbf{b})} = (a_1,-b_1,a_2,b_2)$.
\end{defy}

A curve $\gamma_{(\mathbf{a},\mathbf{b})}$ is Lagrangian if and only
if  its phase portraits
satisfy  $\|\mathbf{a}'\|+\|\mathbf{b}'\|>0$ and
$\mathbf{a}'\wedge \mathbf{a}''=\mathbf{b}'\wedge \mathbf{b}''$.

\begin{propy}{Let $\mathbf{a}, \mathbf{b}:I \to \R^2$ be two regular
    plane curves without inflection points.
Then, they are second order deformations
each other with respect to the special affine group
if and only if $\gamma_{(\mathbf{a},\mathbf{b})}$ is a Lagrangian
curve of $\R^4$.}
\end{propy}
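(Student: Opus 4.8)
The plan is to reduce both statements to a single linear-algebraic condition on the $2\times 2$ ``frame'' matrices built from the first two derivatives of the phase portraits, and then to invoke the Lagrangian characterization recorded just above, namely that $\gamma_{(\mathbf{a},\mathbf{b})}$ is Lagrangian if and only if $\|\mathbf{a}'\|+\|\mathbf{b}'\|>0$ and $\mathbf{a}'\wedge\mathbf{a}''=\mathbf{b}'\wedge\mathbf{b}''$. Throughout I write an element of the special affine group as a pair $(v,M)$ with $M\in\SL(2,\R)$ and $v\in\R^2$, acting by $(v,M)\star\mathbf{x}=M\mathbf{x}+v$, and I abbreviate by $F_{\mathbf{a}}(t)$ and $F_{\mathbf{b}}(t)$ the $2\times 2$ matrices whose columns are $(\mathbf{a}'(t),\mathbf{a}''(t))$ and $(\mathbf{b}'(t),\mathbf{b}''(t))$ respectively; note that $\det F_{\mathbf{a}}=\mathbf{a}'\wedge\mathbf{a}''$ and $\det F_{\mathbf{b}}=\mathbf{b}'\wedge\mathbf{b}''$.

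First I would unwind the definition of second order deformation. A smooth map $t\mapsto(v(t),M(t))$ realizes $\mathbf{a}$ and $\mathbf{b}$ as second order deformations of each other precisely when, for each fixed $t$, the curve $s\mapsto M(t)\mathbf{b}(s)+v(t)$ has the same $2$-jet at $s=t$ as $\mathbf{a}$. Since $(v(t),M(t))$ is frozen while one differentiates in the curve parameter $s$, this amounts to the three equations $M(t)\mathbf{b}(t)+v(t)=\mathbf{a}(t)$, $M(t)\mathbf{b}'(t)=\mathbf{a}'(t)$ and $M(t)\mathbf{b}''(t)=\mathbf{a}''(t)$. The last two combine into the single matrix identity $M(t)\,F_{\mathbf{b}}(t)=F_{\mathbf{a}}(t)$, while the first merely fixes the translation part $v(t)=\mathbf{a}(t)-M(t)\mathbf{b}(t)$ once $M(t)$ is known.

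For the forward implication, I would assume such $(v,M)$ exists. Taking determinants in $M(t)F_{\mathbf{b}}(t)=F_{\mathbf{a}}(t)$ and using $\det M(t)=1$ yields $\mathbf{b}'\wedge\mathbf{b}''=\mathbf{a}'\wedge\mathbf{a}''$. Since $\mathbf{a}$ and $\mathbf{b}$ are regular, $\|\mathbf{a}'\|+\|\mathbf{b}'\|>0$ holds trivially, so the Lagrangian characterization applies and $\gamma_{(\mathbf{a},\mathbf{b})}$ is Lagrangian. For the converse, I would assume $\gamma_{(\mathbf{a},\mathbf{b})}$ is Lagrangian, hence $\mathbf{a}'\wedge\mathbf{a}''=\mathbf{b}'\wedge\mathbf{b}''$. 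Because $\mathbf{b}$ has no inflection points, $\det F_{\mathbf{b}}=\mathbf{b}'\wedge\mathbf{b}''$ never vanishes, so $F_{\mathbf{b}}(t)$ is invertible and I set $M(t):=F_{\mathbf{a}}(t)F_{\mathbf{b}}(t)^{-1}$ and $v(t):=\mathbf{a}(t)-M(t)\mathbf{b}(t)$. Smoothness of $\mathbf{a},\mathbf{b}$ and of matrix inversion makes $(v(t),M(t))$ smooth, and $\det M(t)=\det F_{\mathbf{a}}/\det F_{\mathbf{b}}=1$ by the equality of the two wedge invariants, so $M(t)\in\SL(2,\R)$; by construction this pair satisfies the three jet equations, exhibiting the second order deformation.

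The computation underlying the quoted Lagrangian characterization (that $\Lambda(\gamma',\gamma'')=\mathbf{a}'\wedge\mathbf{a}''-\mathbf{b}'\wedge\mathbf{b}''$ for $\gamma=\gamma_{(\mathbf{a},\mathbf{b})}$) is routine, so I would treat it as given. The only genuine subtlety, and the step I would be most careful with, is the correct reading of the jet condition: one must freeze the group element at $(v(t),M(t))$ and differentiate only in the intrinsic curve parameter, which is exactly what turns ``osculation to second order'' into the clean matrix equation $M\,F_{\mathbf{b}}=F_{\mathbf{a}}$. The no-inflection hypothesis enters precisely once, to guarantee both the invertibility of $F_{\mathbf{b}}$ and the smoothness of the resulting $M(t)$ in the converse direction; without it the frame matrix could degenerate and the required unimodular factor need not exist or be smooth.
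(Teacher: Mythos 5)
Your proof is correct and follows essentially the same route as the paper's: reduce the jet condition to the matrix identity $M F_{\mathbf{b}}=F_{\mathbf{a}}$, take determinants for one direction, and in the other use the no-inflection hypothesis to invert the frame matrix and define $M=F_{\mathbf{a}}F_{\mathbf{b}}^{-1}$ with unit determinant (the paper only swaps the roles of $\mathbf{a}$ and $\mathbf{b}$, which is immaterial by symmetry). Your explicit handling of the regularity condition $\|\mathbf{a}'\|+\|\mathbf{b}'\|>0$ is a small but welcome addition the paper leaves implicit.
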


\begin{proof}
It suffices to prove that $\mathbf{a}$ and $\mathbf{b}$ are second order deformations each other with respect to the special affine group if and only if $\mathbf{a}'\wedge \mathbf{a}''=\mathbf{b}'\wedge \mathbf{b}''$. If $\mathbf{a}$ and $\mathbf{b}$ are second-order deformations each other, then there exist smooth maps $A:I\to \mathrm{SL}(2,\R)$ and $T:I\to \R^2$ such that $A(t_0)\mathbf{a}+T(t_0)$ and $\mathbf{b}$ have the same second order jet at $t_0$, for every $t_0\in I$. This implies
$$\mathbf{b}'|_{t_0}=A(t_0)\mathbf{a}'|_{t_0},\quad \mathbf{b}''|_{t_0}=A(t_0)\mathbf{a}''|_{t_0}.$$
We then have
$$\mathbf{b}'|_{t_0}\wedge \mathbf{b}''|_{t_0}
= \mathrm{det}(A(t_0))\mathbf{a}'|_{t_0}\wedge \mathbf{a}''|_{t_0}
=\mathbf{a}'|_{t_0}\wedge \mathbf{a}''|_{t_0}, \quad \forall t_0\in I.$$
This proves that $\gamma_{(\mathbf{a},\mathbf{b})}$ is a Lagrangian
curve. Conversely, suppose that $\mathbf{a}$ and $\mathbf{b}$ are two
curves without inflection points and that
$\gamma_{(\mathbf{a},\mathbf{b})}$  is a Lagrangian curve. Since the
two curves do not have inflection points the functions
$\mathrm{det}(\mathbf{a}',\mathbf{a}'')$ and
$\mathrm{det}(\mathbf{b}',\mathbf{b}'')$ are nowhere
vanishing. Moreover, the assumption that
$\gamma_{(\mathbf{a},\mathbf{b})}$  is Lagrangian implies
$\mathrm{det}(\mathbf{a}',\mathbf{a}'')
=\mathrm{det}(\mathbf{b}',\mathbf{b}'')$.
Therefore,
if we set $A=(\mathbf{b}',\mathbf{b}'')\cdot
(\mathbf{a}',\mathbf{a}'')^{-1}$ we have a map with values in the
special linear group.
 If we put
$T = \mathbf{b}-A\cdot \mathbf{a}$, we
obtain
$$\mathbf{b}(t_0)
=A(t_0)\mathbf{a}(t_0)+T(t_0),\quad\mathbf{b}'|_{t_0}
=A(t_0)\mathbf{a}'|_{t_0},\quad\mathbf{b}''|_{t_0}=A(t_0)\mathbf{a}''|_{t_0}.$$
This means that $\mathbf{b}$ and $A(t_0)\mathbf{a}+T(t_0)$ have the
same second order jet at $t_0$, for every $t_0\in  I$. So,
$\mathbf{a}$ and $\mathbf{b}$ are second order deformations each other
with respect to the special affine group.
\end{proof}

\section{Symplectic moving frames}
  \label{techno} 

The goal is to determine a $\ASpe(4,\R)$-valued one form $\tilde{\Omega}$ on
jets of Lagrangian curves that satisfies $\dd \tilde\Omega = \tilde\Omega \wedge
\tilde\Omega$.
As an intermediate tool we define, following \cite{fels99}, a
\emph{moving frame} as an appropriate (right) equivariant map
$\rho : \jva(\R,\R^4) \rightarrow \mathcal{S}(4,\R)$.
Then $\tilde \Omega= \rho^* \Omega$, where $\Omega$ is the (right)
Maurer-Cartan form on $\ASp(4,\R)$

A moving frame is implicitly defined by a choice of
\emph{cross-section}. The technology introduced in
\cite{fels99}, and further developed in
\cite{hubert09,hubert13,hubert12,mansfield10},
lends itself to an algorithmic treatment.
Neither the moving frame, nor the differential invariants, need to be
known explicitly to characterize $\tilde{\Omega}$.
In the present case of curves in affine  symplectic geometry, we can
express everything in terms of the symplectic curvatures
that we introduced.

In the first subsection we shortly review the results that enable us
to perform the computations formally. In each of the three next
subsections we examine the results obtained for different choices of
cross-section.
The computations are lead with the set of Maple routines \textsc{aida}
\cite{aida} that works on top of the libraries
\emph{DifferentialGeometry} and \emph{diffalg} \cite{diffalg,ncdiffalg}.
 The first cross-section is the one used in
\cite{MB2,mansfield10,KOT,V}. It assumes that the first symplectic
curvature does not vanish. Contrary to expectations, we
provide a minimal order cross-section that removes this restriction.
As a more general construction for $\ASp(2n,\R)$ we can construct a
moving frame thanks to a symplectic Gram-Schmidt process. This is
illustrated in the last subsection.

\subsection{Moving frames from sections}

For this review, we place ourself in a slightly more general context.
We consider a $r$ dimensional (matrix) Lie group $\gva$
acting on (an open set of)  $\R^n$.
To each element $\frak{a}$ in the Lie algebra $\Ge$ of $\gva$
we can associate a vector field
$V_{\frak{a}}$ the flow of which is an orbit of the action of a one-dimensional
subgroup of $\gva$ classically denoted $e^{\frak{a}t}$.
To a basis of the Lie algebra thus correspond $r$
\emph{infinitesimal generators} of the action
of $\gva$ on $\R^n$.

We consider the jets $\jva[k](\R^m,\R^n)$, or simply $\jva[k]$, of  parameterized
$m$-dimensional submanifolds.
$\D_1,\ldots,\D_m$ are the total derivations with respect to the parameters.
The action of $\gva$ is prolonged to those jet spaces so as to be
compatible with those total derivations. Explicit
prolongation formulae for the action and the infinitesimal generators can be found
in \cite{olver:yellow} for instance. Like many such operations, their implementation
is available through the Maple library \emph{DifferentialGeometry}. In
the following,
$V_1,\ldots,V_r$  denote the appropriate prolongation of the
 infinitesimal generators.

The prolonged action of $\gva$ on $\jva[k]$ is  denoted by
$\star$. It is given by a smooth map
$$\begin{array}{ccc}\gva\times \jva[k]  & \rightarrow & \jva[k] \\
  (\lambda, z) & \mapsto & \lambda \star z \end{array}$$
If the group $\gva$ acts locally effectively, a  mild hypothesis,
there exists $s\in \N$ such that the generic orbits of the prolonged action of
$\gva$  on $\jva[s]$ have the same dimension $r$
as the group \cite[Theorem 5.11]{olver:purple}.
We place ourselves in a neighborhood of a
point $z_0\in \jva[s]$ where the distribution defined by the
prolonged infinitesimal generators $V_1,\ldots,V_r$
of the action has full rank $r$. The orbits of the points in this neighborhood are
of dimension $r$  and the action is locally free there.
Through $z_0$ we can  find a \emph{cross-section}, i.e. a manifold $\mathcal{P}$
of codimension $r$ that  is transverse to the orbits.

Assume the cross-section $\mathcal{P}$ is determined as the level set
$C=(c_1,\ldots, c_r)\in\R^r $ of a map $P=(p_1,\ldots, p_r) : \jva[s]
\rightarrow \R^r$. In other words $\mathcal{P}$ is
defined by the equations $p_1(z)=c_1, \ldots, p_r(z)=c_r$.
Then $\mathcal{P}$ is transverse to the orbits in the neighborhood of
one of its point $z $ if   the  $r\times r$ matrix
\[ V(P) = \left( V_i(p_j) \right)_{i,j} \]
is invertible when evaluated at $z\in\sva$.
Note that the matrix $V(P)$ is the Jacobian of the map
$\gva \rightarrow \R^r$ defined by $\lambda \mapsto
P(\lambda\star z)$ at identity.
By virtue of the implicit function theorem,
there exists a neighborhood $\mathcal{U}$  of $z$
and  a unique smooth map $\rho : \mathcal{U} \rightarrow \gva$ such that
\[ P( \rho(z) \star z) = C \hbox{ and } \rho|_\mathcal{P} = e. \]
This map has the sought equivariant property:
\( \rho(\lambda \star z) = \rho(z) \cdot \lambda^{-1} .\)

Beside the moving frame, a local cross-section allows us to define an
\emph{invariantization process} and the \emph{normalized invariants}\footnote{
Contrary to the moving frame construction, the invariantization
does not restrict to locally free actions. See \cite{hubert07b}.}.
Given a smooth function $f:\mathcal{U}\subset \jva[s+k] \rightarrow \R$ its invariantization
$\inv f: \mathcal{U}\subset \jva[s+k] \rightarrow \R$ is defined by $\inv f( z) = f(\bar{z})$
where $\bar{z}$ is the intersection of the connected part
of the orbit of $z$ with the cross-section $\mathcal{P}$.
Analytically this is given as $\inv f(z) = f(\rho(z)\star z)$.
The \emph{normalized  invariants}
are the invariantization of the coordinate functions \cite{fels99,hubert07b}.
We can compute them algebraically \cite{hubert07b} but it is often preferable
to work with those formally.
This is made possible by the fact that
$\inv f( z) = f(\inv z)$ and $P(\inv z) =C$.
In particular, if $f$ is an invariant then $f(z)=f(\inv z)$.
We can therefore work formally with the normalized invariants
$\inv z$ subjected  to the relationships defined by the chosen
cross-section. This idea is reinforced by
the explicit relation between derivation and invariantization
\cite[Section 13]{fels99}, \cite[Theorem 3.6]{hubert09}:
\begin{equation} \label{rec}
\D(\inv f) = \inv( \D f) - K \cdot \inv \left(V(f)\right) \end{equation}
where $\D=\tr{\begin{pmatrix} \D_1 & \ldots & \D_m\end{pmatrix}}$ and $K$ is the $m\times r$ matrix
\begin{equation} \label{mc}
K
=  \inv \left( \D(P) \right)\cdot \inv\left(  V(P) \right)^{-1}\!\!
\end{equation}
defined with the $m\times r$  and $r\times r$ matrices
\[
\D(P) =\left(\D_i(p_j)\right)_{ 1\leq i\leq m ,\\1\leq j\leq r},
\quad
V(P) = \left( V_i(p_j) \right)_{1\leq i, j\leq r}.
\]
Thanks to this formula we can characterize finite sets of differential
invariants as generating. The first such set is the set of
\emph{normalized invariants} of order $s+1$ and less.
Their complete syzygies are described
 in \cite{hubert09}; They are  built on \Ref{rec}. Of relevance to the
 geometric applications, the Maurer-Cartan invariants are the entries
 of the matrix $K$. From the formula above, we see that those entries
 consist of differential invariants of order $s+1$ at most.
 They form a  generating set of differential invariants \cite{hubert13}.

Formula \Ref{mc} provides the expression of the Maurer-Cartan
invariants in terms of the normalized invariants of order $s+1$.
Conversely,  normalized invariants, and thus any differential invariants, can be written effectively in terms
of Maurer-Cartan invariants thanks to \Ref{rec}.
The syzygies on a set of generating differential invariants allows to
determine smaller sets of generators algorithmically, with
differential elimination \cite{hubert05,hubert09,hubert07c}.

The geometric importance of the Maurer-Cartan invariants comes from
the fact that they describe the pullback by $\rho$ of the (right) Maurer-Cartan
form ${\Omega}$:
\begin{equation}\label{serret}
\rho^* \Omega \equiv - \sum_{i=1}^m\left(\sum_{j=1}^r K_{ij} \, \frak{a}_j \right) dt_i
\end{equation}
 where
 $ \frak{a}_1,\ldots, \frak{a}_r$ form a basis of the Lie algebra of
 $\gva$ and $(dt_1,\ldots,dt_m)$ form the horizontal  one-form basis
 dual to $(\D_1, \ldots, \D_m)$, i.e. $t_1,\ldots,t_m$ are the parameters.
In (\ref{serret}) the symbol $\equiv$ stands for equality modulo the contact ideal.
The Maurer-Cartan invariants $K_{ij}$ are subjected
 to the syzygies determined by the pullback by $\rho$ of the structure equation
$\dd {\Omega} = \Omega \wedge \Omega$.

A  difficulty in the presented formalism is to determine the
neighborhood $\mathcal{U}$ in which the normalized invariants and the
moving frame are well defined. Even for linear actions one quickly encounters
algebraic functions. As we shall see in the next subsections,
the case of the affine symplectic action is particularly well
behave. One already saw that the symplectic curvatures are polynomial
functions. One shall discover that, for a relevant choice of
cross-section, the moving frame and the normalized invariants are
given by rational functions at worst. The open set $\mathcal{U}$ is a
subset of the linearly  full curves determined by a single equation,
either $\kappa_1\neq 0$ for the first choice of cross-section, or
$\kappa_2\neq 0$ for the second and third choice.

\subsection{Section previously considered}
     \label{previous} 

The orbits of the action of $\ASp(4,\R)$ prolonged to  $\jva[4](\R,\R^4)$ are 14
dimensional, as can be checked by computing the rank of the prolonged
infinitesimal generators.
The cross-section chosen in \cite[Section 3.2]{MB2},
\cite[Example 5.5.2]{mansfield10}, \cite{KOT,V}
 is given by the equations:
\[          \x{0}{1}=0,\;\x{0}{2}=0,\;\x{0}{3}=0,\;\x{0}{4}=0 ,\]
\[ \x{1}{1}=1,\, \x{1}{2}=0,\, \x{1}{3}=0,\x{1}{4}=0, \]
\[ \x{2}{1}=0,\, \x{2}{2}=0,\, \x{2}{4}=0,\]
\[  \x{3}{2}+\x{2}{3}=0,\,   \x{3}{4}=0,\]
\[ \x{4}{2}+2\,\x{3}{3}=0. \]
The manifold in $\jva[4](\R,\R^4)$ that those equations define is transverse to
the orbits in a neighborhood of any of its point, except where
$\det ( V(P))|_\mathcal{P} =\left(\x{2}{3}\right)^5\x{4}{4}$ vanishes.

Applying the replacement properties of normalized invariants defined by
this cross-section we immediately see that:
$$\kappa_{1} = \Lambda(\X[1],\X[2])=\inv \x{2}{3},\quad
\kappa_2 =  \Lambda(\X[2],\X[3])=-\inv\x{2}{3}\,\iota\x{3}{1},\,\dots $$
and
$$ \phi =  \det \left( \X[1],\X[2],\X[3],\X[4] \right) = \left(\inv
  \x{2}{3}\right)^2\,\inv \x{4}{4}.$$
From \Ref{mc} and  \Ref{serret} and the basis (\ref{smat},\ref{tmat})
of the Lie agebra,
a couple of steps of differential elimination handled by \emph{diffalg}
allow us to write the Maurer-Cartan form in terms of the symplectic curvatures:
\begin{equation} \label{previous:serret}
\rho^* \Omega \equiv -
 \left(\begin{array}{c|cccc}
0 & 0 & 0 & 0& 0\\  \hline \\
1& 0 & 0 & \displaystyle \frac{\kappa_2}{\kappa_1^2} & 1 \\ \\
0& 0 & 0 & 1 &  \displaystyle \tau \\ \\
0& \displaystyle -\kappa_1 & 0 & 0 & 0 \\ \\
0& 0 &  \displaystyle \frac{\phi}{\kappa_1^3} & 0 & 0
\end{array}\right) \dd t
\end{equation}
where
\begin{eqnarray*}
\tau & = &\frac{ -\inv \x{5}{2}+2\inv \x{2}{3}\inv \x{3}{1}-3\inv
  \x{4}{3}}{\inv \x{4}{4}}
\\ & =  &\frac{\kappa_1^4}{\phi^2}\,\kappa_4+
\frac{\kappa_1^3\kappa_2'}{\phi^2}\,\kappa_1'''
-\frac{(\kappa_1''-\kappa_2)\kappa_1^3}{\phi^2}\,\kappa_2''
+\frac{2\kappa_1'\kappa_1^2}{\phi^2}\,\phi'
-2\frac{\kappa_1^2}{\phi} (2\kappa_1''-\kappa_2)
 \\ &&
-\frac{\kappa_1^2}{\phi^2} (\kappa_2\kappa_1'\kappa_2'+\kappa_2\kappa_1''^2-2\kappa_1''\kappa_2^2
+2\kappa_2'^2\kappa_1-\kappa_1''\kappa_1'\kappa_2'
+\kappa_2^3) .
\end{eqnarray*}
We can similarly determine algorithmically all the normalized invariants in terms
of the symplectic curvatures. We organise the information in matrix form:
\begin{eqnarray} \label{previous:proj}
\begin{pmatrix}
\inv \X[0] \! &\! \inv\X[1] \! &\! \inv \X[2] \! &\! \inv\X[3] \! &\!  \inv\X[4]
\end{pmatrix}
\!\! & \!\! =\!\! &\!\!
\left( \begin{array}{ccccc}
0 &1& 0 &  -\frac{\kappa_2}{\kappa_1} & -\frac{\kappa_2'}{\kappa_1}
\\
0 & 0 & 0 &   -\kappa_1 &  -2\kappa_1'
\\
0 & 0 & \kappa_1 & \kappa_1' & \kappa_1''-\kappa_2
\\
0 & 0 & 0 &0 & \frac{\phi}{\kappa_1^2}
\end{array}\right).
\end{eqnarray}

Here the transversality condition is
$\inv (\det V(P))=-\kappa_1^3\,\phi\neq 0$ and both the moving frame
and the normalized invariants are well defined on the open set this
inequation defines.
Consider a linearly full curve $\gamma:I\rightarrow \R^4$,
its jets $j^{(k)}(\gamma):I\rightarrow \jva[k]$ and the functions
$\bar{\kappa}_i= \kappa_i \circ j^{(i+1)}(\gamma)$.
Assume $\bar{\kappa}_1$ does not vanish.
The curve $\rho( j^{(4)}(\gamma)) \star  j^{(4)}(\gamma)$ belongs to
the cross-section $\sva$ and is uniquely determined
by $\bar\kappa_1,\bar\kappa_2$ and $\bar\kappa_3$ as
can be seen from \Ref{previous:proj}.

Conversely, consider smooth functions
$\bar\kappa_1,\bar\kappa_2,\bar\kappa_3, \bar\kappa_4:I\rightarrow\R$
such that $\bar\kappa_1$ and
$\bar\phi=\bar\kappa_2^2-\bar\kappa_1\bar\kappa_3
   +\bar\kappa_1'\bar\kappa_2'-\bar\kappa_2\bar\kappa_1''$
do not vanish anywhere.
On one hand they define a curve
$\bar{\gamma}: I \rightarrow \sva\subset \jva[4]$  given by \Ref{previous:proj}.
On the other hand they define  a one-form $\bar\Omega$ on $I$
with values in $\ASp(4,\R)$ of the shape \Ref{previous:serret}.
By \theor{fundamental}, there is a curve $\bar{\rho}:I\rightarrow \ASp(4,\R)$ that is unique up to
an element of $\ASp(4,\R)$ such that $\bar{\rho}^*\Omega=\bar{\Omega}$.
Then $\bar{\rho}^{-1} \star \bar{\gamma}$ is the jet of a curve
$\gamma : I \rightarrow \R^4$ such that  the functions $\bar \kappa_i $ are its
symplectic curvatures.

\subsection{Minimal order section with  specialization property}
      \label{specialize} 

The cross-section in  $\jva[4]$ of the previous paragraph is not transverse to the
jets of Lagrangian curves.
It was claimed in \cite{KOT,V} that Lagrangian curves
 require higher order frames.
Such is not the case. We exhibit two cross-sections in  $\jva[4]$
that are  transverse to full Lagrangian curves. The
first one is even of \emph{minimal order} \cite{olver07,hubert09}.

The equations
\[          \x{0}{1}=0,\;\x{0}{2}=0,\;\x{0}{3}=0,\;\x{0}{4}=0 \]
\[ \x{1}{1}=1,\;\x{1}{2}=0,\;\x{1}{3}=0,\;\x{1}{4}=0, \]
\[       \x{2}{1}=0,\;\x{2}{2}=1,  \;      \x{2}{4}=0\]
\[      \x{3}{1}=0,\;\x{3}{2}=0,\]
\[\x{4}{1}=0 .\]
define a cross-section in the neighborhood of any of its point, except
when $(\x{3}{4}\,\x{4}{2}\,\x{2}{3}+\x{4}{4}\,\x{3}{3} -\x{3}{4}\,\x{4}{3})\,\x{3}{4}=0$.

From the replacement properties of normalized invariants defined by
this cross-section we see that:
\[
\kappa_1 = \Lambda(\X[1],\X[2]) = \inv \x{2}{3},
\quad
\kappa_2 =  \Lambda(\X[2],\X[3]) = \inv \x{3}{4} ,
\quad
 \kappa_3 =  \Lambda(\X[4],\X[3]) = - \inv\x{3}{4}  \inv\x{4}{2}.
\]
Applying \Ref{serret} and \Ref{mc} with the basis
(\ref{smat},\ref{tmat}) of the Lie algebra,
and some differential elimination,
we algorithmically obtain
\begin{equation} \label{specialize:serret}
\rho^* \Omega \equiv  -\left(\begin{array}{c|cccc}
0 & 0 & 0 & 0 & 0 \\ \hline
1 &0 & \displaystyle\kappa_1\,\tau & -\tau
   &\displaystyle \frac{\kappa_1'\tau}{\kappa_2}
\\
0 & 1 & \displaystyle -\frac{\kappa_1\kappa_1' \tau}{\kappa_2}
& \displaystyle \frac{\kappa_1'  \tau}{\kappa_2}
&  \displaystyle -\frac{{\kappa_1'}^2\tau+\kappa_3}{\kappa_2^2}
 \\
0 &  \displaystyle \kappa_1 & 0 & 0 & -1
\\
0 & 0 &  \displaystyle \kappa_2+\kappa_1^2\tau
  & \displaystyle -\kappa_1\tau
 &\displaystyle \frac{\kappa_1\kappa_1' \tau}{\kappa_2}
\end{array}\right)\, \dd t ,
\end{equation}
where
\[\tau = \frac{\inv \x{3}{4}\,\inv \x{5}{1}}{\phi} =
\frac{\kappa_2^2}{\phi^2}\,\kappa_4
 -\frac{\kappa_2\kappa_2'}{\phi^2}\,\kappa_3'
+\frac{\kappa_2\kappa_3}{\phi^2}\, \kappa_2''
-\frac{\kappa_2\kappa_3^2}{\phi^2}.
\]
Furthermore, thanks to \Ref{rec}, we have:
\begin{eqnarray} \label{specialize:proj}
\begin{pmatrix}
\inv \X[0] \! &\!\inv\X[1] \! &\! \inv \X[2] \! &\! \inv\X[3] \! &\!  \inv\X[4]
\end{pmatrix}
\!\! & \!\! =\!\! &\!\!
\left( \begin{array}{ccccc}
0 & 1 & 0 & 0 &0
\\
0 & 0 & 1 & 0 & - \frac{\kappa_3}{\kappa_2}
\\
0 & 0 & \kappa_1 & \kappa_1' & \kappa_1''-\kappa_2
\\
0 & 0 & 0 & \kappa_2 & \kappa_2'
\end{array}\right).
\end{eqnarray}
This latter equality defines the moving frame explicitly since
\[\rho \cdot \begin{pmatrix}
           1 \! &\! 0 \! &\! 0 \! &\! 0\! &\! 0 \\
           \X[0] \! &\! \X[1] \! &\! \X[2] \! &\! \X[3] \! &\!  \X[4]
           \end{pmatrix}
= \begin{pmatrix}
1\! & \! 0\! & \! 0\! & \! 0\! & \!0 \\
\inv \X[0] \! &\! \inv\X[1]\! & \!\inv \X[2]\! & \!\inv\X[3]\! & \! \inv\X[4]
\end{pmatrix}
\]

Here the transversality condition is
$\inv\left(\det  V(P)\right)=-\kappa_2 \, \phi \neq 0$.
Consider a linearly full curve $\gamma:I\rightarrow \R^4$,
its jets $j^{(k)}(\gamma):I\rightarrow \jva[k]$ and the functions
$\bar{\kappa}_i= \kappa_i \circ j^{(i+1)}(\gamma)$.
Assume $\bar{\kappa}_2$ does not vanish.
The curve $\rho( j^{(4)}(\gamma)) \star  j^{(4)}(\gamma)$ belongs to
the cross-section $\sva$ and is uniquely determined
by $\bar\kappa_1,\bar\kappa_2$ and $\bar\kappa_3$ as
can be seen from \Ref{specialize:proj}.

Conversely, consider smooth functions
$\bar\kappa_1,\bar\kappa_2,\bar\kappa_3, \bar\kappa_4:I\rightarrow\R$
such that $\bar\kappa_2$ and
$\bar\phi=\bar\kappa_2^2-\bar\kappa_1\bar\kappa_3
   +\bar\kappa_1'\bar\kappa_2'-\bar\kappa_2\bar\kappa_1''$
do not vanish anywhere.
On one hand they define a curve
$\bar{\gamma}: I \rightarrow \sva\subset \jva[4]$ given by \Ref{specialize:proj}.
On the other hand they define  a one-form $\bar\Omega$ on $I$
with values in $\ASp(4,\R)$ of the shape \Ref{specialize:serret}.
By \theor{fundamental}, there is a curve $\bar{\rho}:I\rightarrow \ASp(4,\R)$ that is unique up to
an element of $\ASp(4,\R)$ such that $\bar{\rho}^*\Omega=\bar{\Omega}$.
Then $\bar{\rho}^{-1} \star \bar{\gamma}$ is the jet of a curve
$\gamma : I \rightarrow \R^4$ such that $\bar \kappa_i $ are its
symplectic curvatures.
We shall apply this construction to determine Lagrangian curves of
constant symplectic curvatures in Section~\ref{remarkable}.

\subsection{Section corresponding to a Gram-Schmidt process}
      \label{gram} 

There is a general method to obtain a moving frame $\rho:
\jva\rightarrow \ASp(2n,\R)$ that can be restricted to jets of
Lagrangian curves. It is obtained through a symplectic Gram-Schmidt
process for which we provide the case of $n=2$.
The underlying cross-section is not of minimal order, but the
 entailed Serret-Frenet matrix is sparser than the one proposed in previous paragraph.

The equations
\[          \x{0}{1}=0,\;\x{0}{2}=0,\;\x{0}{3}=0,\;\x{0}{4}=0 \]
\[ \x{1}{2}=1,\;\x{1}{4}=0, \]
\[       \x{2}{1}=1,\;\x{2}{2}=0,  \; \x{2}{3}=0,\;      \x{2}{4}=0,\]
\[      \x{3}{1}=0,\;\x{3}{2}=0, \;\x{3}{4}=0,\]
\[\x{4}{2}=0, \]
define a cross-section in the neighborhood of any of its point, except
when $\det V(P)|_{\mathcal{P}}=- \left(\x{3}{3}\right)^3\,\x{4}{4} $ vanishes.

From the replacement properties of normalized invariants defined by
this cross-section we see that:
\[
\kappa_1 = \Lambda(\X[1],\X[2]) = -\iota \x{1}{3},
\quad
\kappa_2 =  \Lambda(\X[2],\X[3]) = \iota \x{3}{3} ,
\quad
 \kappa_{3} =  \Lambda(\X[4],\X[3]) = - \iota\x{3}{3}  \iota\x{4}{1} .
\]
The transversality condition is
$\iota\left(\det V(P)\right)=\kappa_2^2 \, \phi \neq 0$.
Applying \Ref{serret} and \Ref{mc}, and some differential elimination,
we algorithmically obtain
\[
\rho^* \Omega \equiv \left(\begin{array}{c|cccc}
0 & 0 & 0 & 0 & 0 \\ \hline
 \ds-\frac{\kappa_1'}{\kappa_2} &0 & \ds -\frac{\phi}{\kappa_2^2}&\ds \frac{\kappa_3}{\kappa_2^2}   & 0
\\ \\
-1 & 0 & 0 & 0 &  \ds \tau
 \\ \\
 \ds \kappa_1 & \ds-\kappa_2 & 0 & 0 & 0
\\ \\
0 & 0 &  \ds \frac{\kappa_1\phi}{\kappa_2^2}
 &\ds \frac{\phi}{\kappa_2^2} & 0
\end{array}\right) \dd t ,
\]
where
$\tau$ is a fifth order invariant such that
\[ \kappa_4= \Lambda(\X[4],\X[5])= \frac{\phi^2}{\kappa_2^2}\,\tau
+ \frac{\kappa_2'\,\kappa_3'-\kappa_3\,\kappa_2''+\kappa_3^2}{\kappa_2
}. \]

The proposed cross-section was actually obtained
through a symplectic Gram-Schmidt process on
$\XX[{[+]}]=\left(\X[1],\X[2],\X[3],\X[4]\right)$
to define a moving frame.
Starting with the assumption that $\kappa_2=\Lambda(\X[2],\X[3])\neq 0$
we introduce the matrix\footnote{
We first  write $R$  above  in terms of the $\Lambda(\X[i],\X[j])$ so
has to visualize the Gram-Schmidt process. The expression of $R^{-1}$
in terms of the symplectic curvatures $\kappa_i$ is given afterwards. }
\[ R = \begin{pmatrix}
     0 & 1 & 0 & 0 \\ \\
  1 & \ds -\frac{\Lambda(\X[1],\X[3])}{\Lambda(\X[2],\X[3])} & 0 & \ds -\frac{\Lambda(\X[3],\X[4]) }{ \phi} \\ \\
0 & \ds \frac {\Lambda(\X[1],\X[2])}{\Lambda(\X[2],\X[3])} & \ds \frac{1}{ \Lambda(\X[2],\X[3]) } &\ds  \frac{ \Lambda(\X[2],\X[4])}{ \phi } \\ \\
0 & 0 & 0 & \ds -\frac{ \Lambda(\X[2],\X[3]) }{ \phi } \end{pmatrix}
\]
so that $Q= \XX[{[+]}] \, R\in\Sp(4,\R)$.

A moving frame $\rho : \jva[4] \rightarrow \ASp(4,\R)$ is then given by
$$\rho =\left( \begin{array}{cc}
1 & 0  \\ -Q^{-1} \X[0] & Q^{-1} \end{array} \right) \in \ASp(4,\R).$$
We can read the equations of the underlying  cross-section from
$R^{-1}$ since:
\begin{eqnarray*}
\begin{pmatrix}
1\! & \! 0\! & \! 0\! & \! 0\! & \!0 \\
\inv \X[0] \! &\! \inv\X[1]\! & \!\inv \X[2]\! & \!\inv\X[3]\! & \! \inv\X[4]
\end{pmatrix}
& = &
\rho 
\cdot \begin{pmatrix}
           1 \! &\! 0 \! &\! 0 \! &\! 0\! &\! 0 \\
           \X[0] \! &\! \X[1] \! &\! \X[2] \! &\! \X[3] \! &\!  \X[4]
           \end{pmatrix}
\\
 = \begin{pmatrix} 1 & 0 \\
 0 & R^{-1} \end{pmatrix}
& = &
\left( \begin{array}{c|cccc} 1 & 0 & 0 & 0 & 0 \\ \hline
0 &\ds \frac{\kappa_1'}{\kappa_2}& 1 & 0 & \ds -\frac{\kappa_3}{\kappa_2}
\\ \\
0 & 1 & 0 & 0 & 0
\\ \\
0 & -\kappa_1 & 0 & \kappa_2 & \kappa_2'
\\ \\
0 & 0 & 0 & 0& \ds -\frac{\phi}{\kappa_2}
\end{array}\right).
\end{eqnarray*}
The process can thus be generalised to determine appropriate
cross-sections for all dimensions.



\section{Lagrangian curves with constant curvatures}
    \label{remarkable} 
\subsection{Serret-Frenet equations}

In Section~\ref{specialize}  (and \ref{gram}) we constructed a moving
frame that can be specialized to Lagrangian curves.
As discussed there,
given functions $\kappa_2,\kappa_3, \kappa_4:I\rightarrow\R$
such that $\kappa_2$  does not vanish
we obtain the unique curve, up to the action of $\ASp(4,\R)$,
with symplectic curvatures $\kappa_1=0$ and $\kappa_2,\kappa_3,
\kappa_4$ as the first column of $\rho^{-1}$ where $\rho: I \rightarrow \ASp(4,\R) $ is a matrix solution of
\[ \rho^* \Omega = - A(\kappa)\,\dd t , \quad \rho(0)\in \ASp(4,\R) \]
where, in accordance with (\ref{specialize:serret})
\[ A(\kappa) = \left(\begin{array}{c|cccc}
0 & 0 & 0 & 0 & 0 \\ \hline
1 &0 & 0 & -\tau  &0
\\
0 & 1 & 0 &0 &  \ds -\frac{\kappa_3}{\kappa_2^2}
 \\
0 & 0 & 0 & 0 & -1
\\
0 & 0 &  \kappa_2 & 0 &0
\end{array}\right) \in \ASpe(4,\R) \]
and
\[\tau =\frac{1}{\kappa_2^2}\,\kappa_4
 -\frac{\kappa_2'}{\kappa_2^3}\,\kappa_3'
+\frac{\kappa_3}{\kappa_2^3}\, \kappa_2''
-\frac{\kappa_3^2}{\kappa_2^3}.
\]
Equivalently we can directly consider the equation for $\tilde{\rho}=\rho^{-1}$
which is the analogue of the  Serret-Frenet  equation
\[ \dt{ \tilde{\rho} } = \tilde{\rho} \, A(\kappa) .\]
In terms of a frame $(p,E_1,E_2,E_1,E_2)$ those are the linear differential equations
\[ p'= E_1,\, E_1'=E_2,\, E_2'=\kappa_2 E_3,\, E_2'=-\tau E_1, \, E_3'=\frac{\kappa_3}{\kappa_2^2}E_2-E_3. \]

\subsection{Classification} \label{classif}

In this section we classify the Lagrangian curves parameterized by arc-length
that have constant symplectic curvatures.
 In other words we investigate the curves with symplectic curvatures
$\kappa_1=0,\kappa_2=1$ and $\kappa_3,\kappa_4$ constant.
For each pair of constants $(\kappa_3,\kappa_4)$
any such  curve is given by the first column of $B \, e^{As}$
where $B\in \ASp(4,\R)$ and
\[ A= \left(\begin{array}{c|cccc}
0 & 0 & 0 & 0 & 0 \\ \hline
1 &0 & 0 & \kappa_3^2 - \kappa_4  &0
\\
0 & 1 & 0 &0 &  \ds -\kappa_3
 \\
0 & 0 & 0 & 0 & -1
\\
0 & 0 &  1 & 0 &0
\end{array}\right).
\]
They can thus be classified according to the
roots of the characteristic polynomial
$\lambda \, \pi(\lambda)=\lambda (\lambda^4 +\kappa_3 \lambda^2
+\kappa_3^2-\kappa_4)$ of $A$.
In each case we provide a curve in the congruence class. It is
described in terms of some $\mu,\nu\in \R$ that are taken such that $\mu>\nu>0$.

\begin{itemize}
\item[I:] $(\kappa_3^2-\kappa_4)(4\kappa_4 - 3\kappa_3^2)\neq
  0$. $\pi$ has four distinct roots.

\begin{itemize}
\item[I.1:] $4\kappa_4<3\kappa_3^2$.
 The roots of $\pi$  are $\{\pm \mu\pm i\nu\}$.
\[{\gamma(s)}   =
\left(\begin{array}{c}
\ds\frac{(\mu^2-3\nu^2)\mu\cos (\nu s)\sinh(\mu s)}{\mu^2+\nu^2}
\\\ds
-\frac{1}{2}\frac{\sin (\nu s)\sinh(\mu s) }{(\mu^2+\nu^2)^2\mu\nu}
+\frac{1}{2}\frac{(\mu^2-3\nu^2)\cos (\nu s)\cosh(\mu s) }{(\nu^2-3\mu^2)(\mu^2+\nu^2)^2\nu^2}
\\\ds
- \frac{1}{2}\frac{\sin (\nu s)\sinh(\mu s) }{(\mu^2+\nu^2)^2\mu\nu}
+\frac{1}{2}\frac{(3\mu^2-\nu^2)\cos (\nu s)\cosh(\mu s) }{(\mu^2-3\nu^2) (\mu^2+\nu^2)^2\mu^2}
\\\ds
\frac{(\nu^2-3\mu^2)\nu\sin (\nu s)\cosh(\mu s)}{\mu^2+\nu^2}
\end{array}\right)\]
\item[I.2:]  $4\kappa_4>3\kappa_3^2$.
The squares of the roots of $\pi$  are real.
\begin{itemize}
\item[I.2.a:] $\pm \sqrt{4\kappa_4 - 3\kappa_3^2}>\kappa_3$.
The roots of $\pi$  are $\{\pm\mu,\pm \nu\}$.
\[ \tr{\gamma(s)}= \begin{pmatrix}
\ds \frac{\sinh (\nu s)}{\sqrt{\mu^2-\nu^2} \nu^{\frac{3}{2}} } ,&
\ds \frac{\cosh (\mu s)}{\sqrt{\mu^2-\nu^2} \mu^{\frac{3}{2}}}, &
\ds\frac{\cosh (\nu s)}{\sqrt{\mu^2-\nu^2} \nu^{\frac{3}{2}} },&
\ds \frac{\sinh (\mu s)}{\sqrt{\mu^2-\nu^2} \mu^{\frac{3}{2}}}
\end{pmatrix}\]
\item[I.2.b:] $\pm \sqrt{4\kappa_4 - 3\kappa_3^2}<\kappa_3$.
 The roots of $\pi$
 are $\{\pm i\mu,\pm i\nu\}$.
\[ \tr{\gamma(s)}= \begin{pmatrix}
\ds\frac{\sin (\nu s)}{\sqrt{\mu^2-\nu^2} \nu^{\frac{3}{2}} },&
\ds\frac{\cos (\mu s)}{\sqrt{\mu^2-\nu^2} \mu^{\frac{3}{2}}} ,&
\ds\frac{\cos (\nu s)}{\sqrt{\mu^2-\nu^2} \nu^{\frac{3}{2}} },&
\ds\frac{\sin (\mu s)}{\sqrt{\mu^2-\nu^2} \mu^{\frac{3}{2}}}
\end{pmatrix}\]
\item[I.2.c:] $- \sqrt{4\kappa_4 - 3\kappa_3^2}<\kappa_3< \sqrt{4\kappa_4 - 3\kappa_3^2}$.
The roots of $\pi$  are $\{\pm i\mu,\pm\nu\}$.
\[ \tr{\gamma(s)}= \begin{pmatrix}
\ds-\frac{\sinh (\nu s)}{\sqrt{\mu^2+\nu^2} \nu^{\frac{3}{2}}} ,&
\ds\frac{\cos (\mu s)}{\sqrt{\mu^2+\nu^2} \mu^{\frac{3}{2}}}, &
\ds\frac{\cosh (\nu s)}{\sqrt{\mu^2+\nu^2} \nu^{\frac{3}{2}}} ,&
\ds\frac{\sin (\mu s)}{\sqrt{\mu^2+\nu^2} \mu^{\frac{3}{2}}}
\end{pmatrix}\]
\end{itemize}
\end{itemize}

\item[II:] $\kappa_4=\kappa_3^2$, $\kappa_3\neq 0$.
Then $\pi(\lambda)=\lambda^2(\lambda^2+\kappa_3)$.

\begin{itemize}
\item[II.1:]  $\kappa_3<0$.
The non-zero roots of $\pi$ are $\{\mu, -\mu\}$, where
$\mu=\sqrt{-\kappa_3}>0$.
\[ \tr{\gamma(s)}= \begin{pmatrix}
\ds s,&
\ds\frac{\cosh (\mu s)}{\mu^{\frac{5}{2}}}, &
\ds \frac{s^2}{2 \mu^{2}} ,&
\ds\frac{\sin (\mu s)}{\mu^{\frac{5}{2}}} \end{pmatrix}\]
\item[II.2:] $\kappa_3>0$.
The non-zero roots of $\pi$ are $\{i\mu, -i\mu\}$, where
$\mu=\sqrt{\kappa_3}>0$.
\[ \tr{\gamma(s)}= \begin{pmatrix}
\ds s,&
\ds\frac{\cos (\mu s)}{\mu^{\frac{5}{2}}}, &
\ds- \frac{s^2}{2 \mu^{2}} ,&
\ds\frac{\sin (\mu s)}{\mu^{\frac{5}{2}}} \end{pmatrix}\]
\end{itemize}

\item[III:] $4\kappa_4=3\kappa_3^2$ and $\kappa_3\neq 0$.
Then $\pi(\lambda)=(\lambda^2+\frac{1}{2}\kappa_3)^2$.

\begin{itemize}
\item[III.1:] $\kappa_3>0$.
The roots of $\pi$ are $\{i\mu, -i\mu\}$ where $\mu=\sqrt{\frac{\kappa_3}{2}}>0$.
\[ \tr{\gamma(s)}= \begin{pmatrix}
\ds\frac{s\,\cos (\mu s)}{\mu^{2}}, &
\ds \frac{s\,\sin (\mu s)}{\mu} + \frac{3 \cos (\mu s)}{2 \mu^{2}} ,&
\ds-\frac{1}{2}\frac{\cos (\mu s)}{\mu^{2}}, &
\ds -\frac{1}{2}\frac{\sin (\mu s)}{\mu^{3}}
 \end{pmatrix}\]
\item[III.2:] $\kappa_3<0$.
The roots of $\pi$ are $\{\mu, -\mu\}$ where $\mu=\sqrt{-\frac{\kappa_3}{2}}>0$.
\[ \tr{\gamma(s)}= \begin{pmatrix}
\ds\frac{s\,\cosh (\mu s)}{\mu^{2}}, &
\ds \frac{s\,\sinh (\mu s)}{\mu} - \frac{3 \cos (\mu s)}{2 \mu^{2}} ,&
\ds-\frac{1}{2}\frac{\cosh (\mu s)}{\mu^{2}},&
\ds \frac{1}{2}\frac{\sinh (\mu s)}{\mu^{3}}
 \end{pmatrix}\]
\end{itemize}

\item[IV:]  $\kappa_3=\kappa_4=0$.  The roots of $\pi$ are zero.
 \[\tr{\gamma(s)}= \begin{pmatrix}
\ds \frac{t}{\sqrt{24}}, & \ds\frac{t^2}{\sqrt{12}},   & -\ds\frac{t^4}{\sqrt{24}}, & \ds \frac{t^3}{\sqrt{12}}
\end{pmatrix}\]

\end{itemize}

All the curves cataloged in the
  previous classification are orbits of one-parameter subgroups of the
  affine symplectic group. For instance, the curve $\mathrm{I.2.b}$ is
  the orbit passing through the point
$$p_{\mu,\nu}=\tr{\left(0,(\mu^3(\mu^2-\nu^2))^{-1/2},(\nu^3(\mu^2-\nu^2))^{-1/2},0\right)}$$
of the $1$-parameter subgroup of $\mathcal{S}(4,\R)$ generated by
$$\mathrm{X}_{\mu,\nu}=\left(
                         \begin{array}{cc}
                           0 & 0 \\
                           0 & J_{\mu,\nu} \\
                         \end{array}
                       \right)\in \mathfrak{s}(4,\R),$$
where
$$J_{\mu,\nu}=\mu(C^2_2-B^2_2)+\nu(B^1_1-C^1_1)\in \mathfrak{sp}(4,\R).$$
In the previous formula, the $4\times 4$ matrices $B^i_j$ and $C^i_j$
denote the generators of $\mathfrak{sp}(4,\R)$ defined in
Subsection~1.1.
 The symplectic curvatures $\kappa_3,\kappa_4$ and the constants
$\mu,\nu$ are related by
$$\kappa_3=\mu^2+\nu^2,\quad \kappa_4=\mu^4+\mu^2\nu^2+\nu^4.$$
Also in the other cases it is possible to write explicitly the
$1$-parameter subgroups that generate the curves.

We would like to highlight here a
fact about the curves of type $\mathrm{I.2.b}$~: their trajectories
are the integral curves of a linear flow on the homogenous Lagrangian
torus $\mathrm{T}^{2}_{\mu,\nu}\subset \R^4$. Such a torus is the
orbit through the point $p_{\mu,\nu}\in \R^4$ of the maximal Abelian
subgroup $\mathrm{T}^2\cong \mathrm{SO}(2)\times \mathrm{SO}(2) $ of
$\mathcal{S}(4,\R)$ consisting of all
$\mathrm{X}(\theta_1,\theta_2)\in \mathcal{S}(4,\R)$ of the following
form
\[ \mathrm{X}(\theta_1,\theta_2) = \left( \begin{array}{c|cccc} 1 & 0 & 0 & 0 & 0 \\ \hline
       0 & \cos(\theta_1) & 0 & -\sin(\theta_1) & 0 \\ 0  & 0 & \cos(\theta_2) & 0 & -\sin(\theta_2) \\ 0  & \sin(\theta_1) & 0 & \cos(\theta_1) &  0
      \\ 0  & 0 & \sin(\theta_2) & 0 &
      \cos(\theta_2) \end{array}\right),\quad \theta_1\theta_2\in \R.
\]
This fact has an immediate geometric consequence~: the trajectories of
the curves of type $\mathrm{I.2.b}$ are either simple closed curves or
else are dense in the $2$-dimensional torus
$\mathrm{T}^{2}_{\mu,\nu}$. Another noteworthy observation is that the
$2$-dimensional tori $\mathrm{T}^{2}_{\mu,\nu}$ are the fibers of the
moment map $\mathrm{m}:\R^4\to \mathfrak{t}^*$ induced by the
Hamiltonian action of the maximal compact Abelian subgroup
$\mathrm{T}^2$. The symbol $\mathfrak{t}^*$ stands for the dual of the
Lie algebra of $\mathrm{T}^2$.

\subsection{Closed curves}

Closed trajectories occur only for curves of Type I.2.b  when
 $\frac{\mu}{\nu} \in \Q$.
The conditions on $\kappa_3,\kappa_4$ can be simplified to
\[ \kappa_3^2 > \kappa_4 >\frac{3}{4} \kappa_3^2
\quad \hbox{ and }\quad
\kappa_3 > 0 .\]
Then
\[ \mu = \sqrt{\frac{\kappa_3+\sqrt{4\kappa_4-3\kappa^2}}{2}}
\quad\hbox{ and }\quad
 \nu = \sqrt{\frac{\kappa_3-\sqrt{4\kappa_4-3\kappa^2}}{2}}. \]
The fact that $\frac{\mu}{\nu}=\frac{m}{n}$, for $m,n\in \N$ can be
rewritten as follows.

\begin{propy}
A Lagrangian curve $\gamma$ 
with constant symplectic curvatures
$\kappa_3,\kappa_4$ is closed if and only if
$\kappa_3>0$, $\kappa_3^2>\kappa_4>\frac{3}{4}\kappa_3^2$ and there exist $m,n\in\N$
\[ \frac{\kappa_3^2}{(m^2+n^2)^2}=\frac{\kappa_4}{m^4+m^2m^2+n^4} .\]
Taking $m$ and $n$ relatively prime, $\gamma$ is a torus knot of type $(m,n)$.
Its symplectic length is $\frac{2\pi  }{\nu \,n}\hbox{lcm}(m,n)$.
\end{propy}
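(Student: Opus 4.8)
The plan is to prove the equivalence by isolating, among the cases of the classification, the only one capable of producing a closed (hence bounded) curve, and then reading that case as a linear flow on the homogeneous torus $\mathrm{T}^2_{\mu,\nu}$. Since a closed curve is bounded, I would first discard, by inspecting the explicit normal forms of the classification, every case but $\mathrm{I.2.b}$: in types $\mathrm{I.1}$, $\mathrm{I.2.a}$, $\mathrm{I.2.c}$ the polynomial $\pi$ has a root off the imaginary axis, and in $\mathrm{II}$, $\mathrm{III}$, $\mathrm{IV}$ it has a vanishing or a repeated root, so the corresponding explicit curve contains a $\cosh$, a $\sinh$, a polynomial in $s$, or a resonant factor $s\cos(\mu s)$, $s\sin(\mu s)$, each of which is unbounded. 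Thus closedness forces type $\mathrm{I.2.b}$, that is $\kappa_3>0$ and $\tfrac34\kappa_3^2<\kappa_4<\kappa_3^2$.

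For a type-$\mathrm{I.2.b}$ curve I would use its explicit expression to present $\gamma$ as the orbit of the one-parameter subgroup $\exp(s\,\mathrm{X}_{\mu,\nu})$ inside $\mathrm{T}^2_{\mu,\nu}$: the pair $(\gamma_1,\gamma_3)$ sweeps one circular factor with angular frequency $\nu$ while $(\gamma_2,\gamma_4)$ sweeps the other with frequency $\mu$, so the orbit is the line $\theta_1=\nu s$, $\theta_2=\mu s$ of slope $\mu/\nu$. By the standard dichotomy for a linear flow on a torus, this orbit closes up exactly when $\mu/\nu\in\Q$ and is dense in $\mathrm{T}^2_{\mu,\nu}$ otherwise; this is the closedness criterion within type $\mathrm{I.2.b}$.

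To convert $\mu/\nu\in\Q$ into the stated curvature relation I would substitute the identities $\kappa_3=\mu^2+\nu^2$ and $\kappa_4=\mu^4+\mu^2\nu^2+\nu^4$ into the proposed equation. After clearing denominators it collapses to $(\mu^2 n^2-\nu^2 m^2)(\mu^2 m^2-\nu^2 n^2)=0$, that is to $\mu/\nu=m/n$ or $\mu/\nu=n/m$. Hence a solution $(m,n)\in\N^2$ of the relation exists if and only if $\mu/\nu$ is rational, and since the relation is symmetric in $(m,n)$ and invariant under common scaling one may take $m,n$ coprime with $\mu/\nu=m/n$. Combined with the type-$\mathrm{I.2.b}$ inequalities, this establishes both implications of the equivalence.

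Finally, for the symplectic length and the knot type I would compute the minimal common period. With $\gcd(m,n)=1$ the angles $\theta_1=\nu s$ and $\theta_2=\mu s$ first return simultaneously to their initial values at the time $T$ determined by $\nu T=2\pi n$, $\mu T=2\pi m$, so that $T=2\pi n/\nu=2\pi m/\mu=\tfrac{2\pi}{\mu n}\,\mathrm{lcm}(m,n)$; since $\gamma$ is parameterized by symplectic arc-length this period is exactly its symplectic length. Over one period $\gamma$ winds $n$ times around the $(\gamma_1,\gamma_3)$-factor and $m$ times around the $(\gamma_2,\gamma_4)$-factor, so its trajectory is the $(m,n)$ torus knot on $\mathrm{T}^2_{\mu,\nu}$. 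I expect the boundedness reduction of the first step to be the only genuine obstacle: each non-$\mathrm{I.2.b}$ normal form must be checked to be unbounded, and one should note in particular that type $\mathrm{III}$ fails despite having purely imaginary roots, the culprit being the resonant terms $s\cos(\mu s)$, $s\sin(\mu s)$ forced by the repeated root.
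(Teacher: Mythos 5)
Your proposal is correct and follows essentially the same route as the paper: the paper's own justification consists of the preceding classification (all types other than $\mathrm{I.2.b}$ are visibly unbounded), the observation that type $\mathrm{I.2.b}$ curves are integral curves of a linear flow on the homogeneous Lagrangian torus $\mathrm{T}^2_{\mu,\nu}$ --- hence closed precisely when $\mu/\nu\in\Q$ --- and the algebraic translation of that rationality into the stated curvature relation via $\kappa_3=\mu^2+\nu^2$, $\kappa_4=\mu^4+\mu^2\nu^2+\nu^4$; you supply exactly these steps, including the factorization $(\nu^2m^2-\mu^2n^2)(\mu^2m^2-\nu^2n^2)=0$. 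One remark: your period $T=2\pi n/\nu=2\pi m/\mu=\frac{2\pi}{\mu\,n}\mathrm{lcm}(m,n)$ differs from the statement's $\frac{2\pi}{\nu\,n}\mathrm{lcm}(m,n)$, but yours is the one consistent with the paper's own numerical examples ($4\pi$ for $\nu=\tfrac12$, $m=3$, $n=1$; $10\pi$ for $\nu=0.8$, $m=5$, $n=4$), so the discrepancy is a typo in the proposition rather than an error on your part.
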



\begin{figure}[ht]
\begin{center}
\includegraphics[height=6cm,width=12cm]{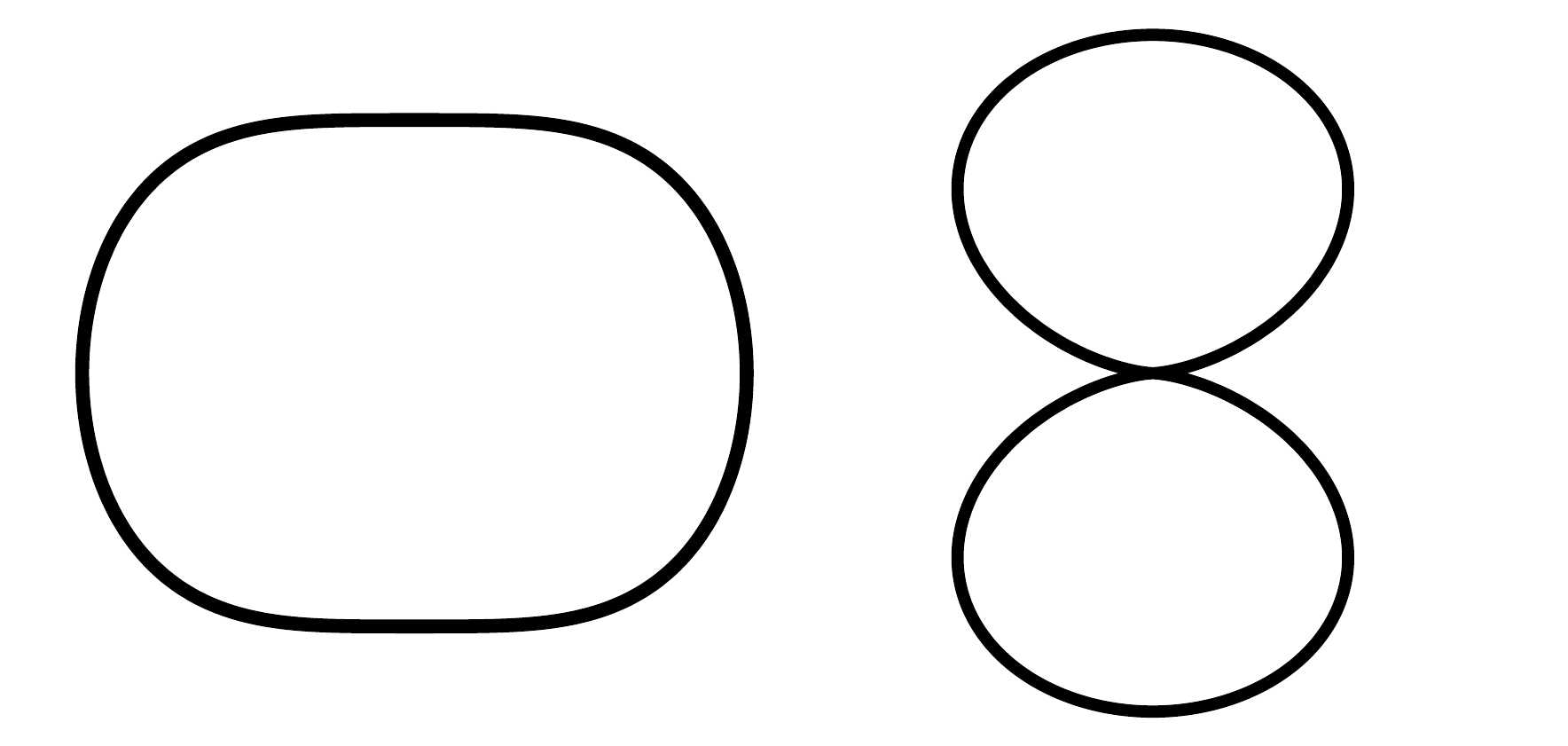}
\caption{Phase portraits of a Lagrangian curve with $\kappa_3=2.5$ and $\kappa_4=5.6875$.}\label{FIG2}
\end{center}
\end{figure}

\begin{figure}[ht]
\begin{center}
\includegraphics[height=6cm,width=12cm]{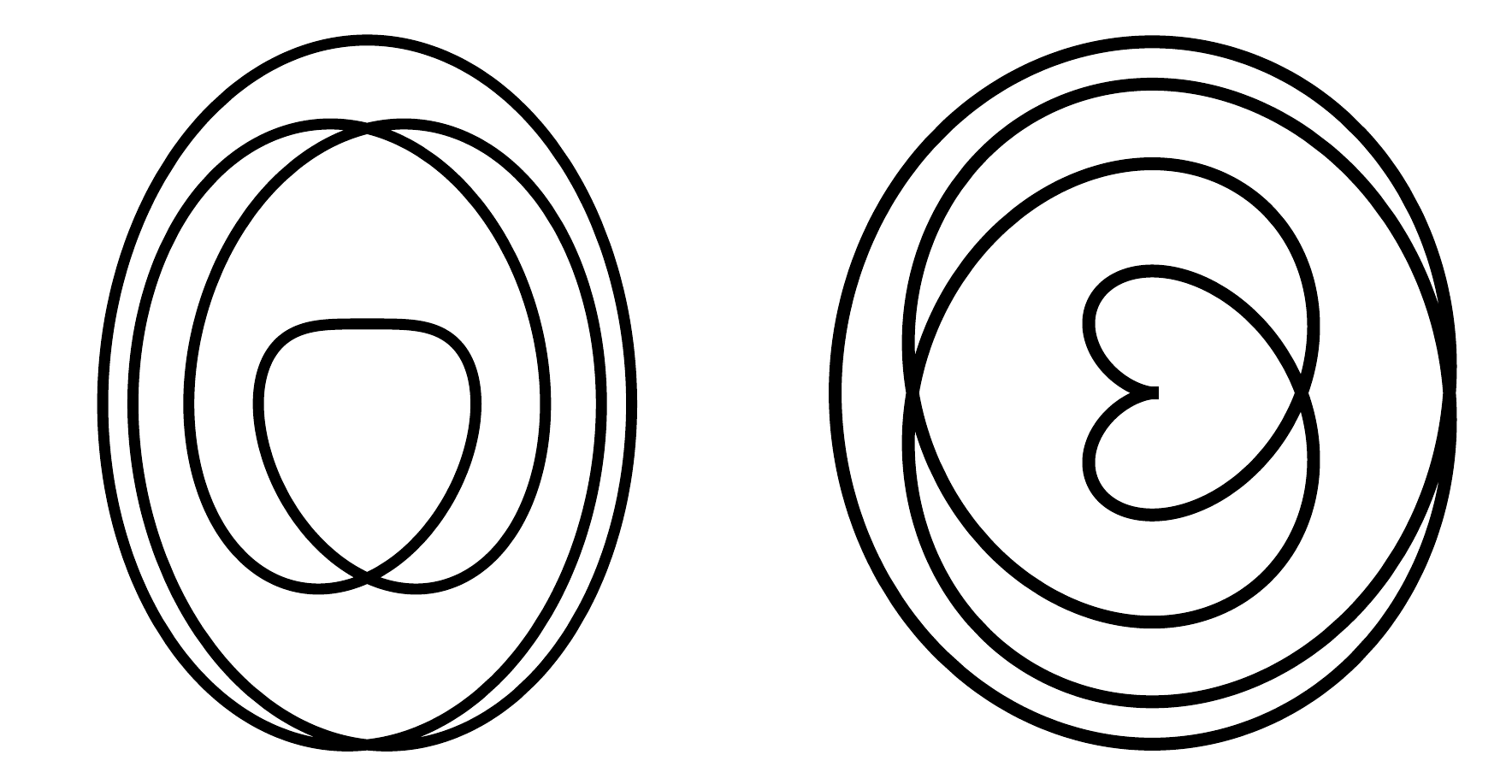}
\caption{Phase portraits of a Lagrangian curve with $\kappa_3=1.64$ and $\kappa_4=2.0496$.}\label{FIG3}
\end{center}
\end{figure}

The concept of second-order deformations of curves
 introduced in Section~\ref{dpairs} can be illustrated on those curves.
If  $(\mathbf{a},\mathbf{b})$ is the pair  of
 phase portraits of a
Lagrangian curve $\gamma_{(\mathbf{a},\mathbf{b})}$, the curves
  $\mathbf{a}$ and $\mathbf{b}$  are
 second-order deformations of each other with respect to the special affine group (d-pairs for short).
Given an element $g\in \mathcal{S}(4,\R)$, $g\star \gamma_{(\mathbf{a},\mathbf{b})}$ is
another Lagrangian curve whose phase portraits
$(\tilde{\mathbf{a}},\tilde{\mathbf{b}})$  make up a new d-pair.
In this way  $\mathcal{S}(4,\R)$ acts on the space of all d-pairs.
Such an action is global in nature and is not originated by a
pseudo-group of transformations of the plane. Its effects can be
rather unpredictable, as we wish to show in  Figure~\ref{FIG2} and  Figure~\ref{FIG3}.

The classification shows that any closed Lagrangian curve
with constant symplectic curvatures is congruent to a curve whose
phase portrait consists of a pair of circles. That such a  pair of
circles are  second-order deformations of
each other with respect to the special affine group is rather easy to
visualise.
Other elements in the congruence class provide more surprising d-pairs.
Figure~\ref{FIG2} reproduces the phase portraits of a
closed Lagrangian curve with constant symplectic curvatures
$\kappa_3=\frac{5}{2}$, $\kappa_4 =\frac{91}{16}$,
which correspond to the data $\nu=\frac{1}{2}$,
$m=3$ and $n = 1$. The symplectic length of such a curve is
$\ell\approx 12.5664$.
Figure~\ref{FIG3} provides the phase portraits of a closed Lagrangian curve with symplectic curvatures
$\kappa_3=1.64$ and $\kappa_4=2.0496$, which correspond to the values $\nu=0.8$,
$m = 5$, $n = 4$. The symplectic length is $\ell \approx 31.4259$.

\subsection{Lagrangian tori}

We recall that an immersed surface $f:S\to \R^4$ is Lagrangian if its
tangent planes are Lagrangian. This notion plays a central role in the
Hamilton-Jacobi theory. Among all Lagrangian surfaces, the Lagrangian
tori have a particular significance because they arise in a natural
way as the fibers of the momentum map of a Liouville-integrable
Hamiltonian system. We now exhibit a procedure to construct families
of Lagrangian tori starting from closed Lagrangian curves with constant curvatures.

Let $\gamma:\R\to \R^4$ be a closed Lagrangian curve with constant curvatures and
symplectic length $\ell_{\gamma}$.
We assume that $\gamma$ is parameterized by the symplectic
arc-length.   Therefore $\kappa_3>0$ and $\kappa_3^2-\kappa_4>0$.
We fix a real constant $h$ and
take any closed regular curve, parameterized by the Euclidean
arc-length, $\alpha :\vartheta\in \R\to \R^3$
whose trajectory lies in the quadric $\mathcal{Q}_h\subset \R^3$ defined by the equation
$x^2+\kappa_3 y^2-z^2=h$. By construction, $\mathcal{Q}_0$ is a
quadratic cone, $\mathcal{Q}_h$, $h<0$, is a two-sheet hyperboloid and
$\mathcal{Q}_h$, $h>0$, is a one-sheet hyperboloid. Let assume that
the trajectory belongs either to $\mathcal{Q}_h^+=\{(x,y,z)\in
\mathcal{Q}_h | z>0\}$ or else to $\mathcal{Q}_h^-=\{(x,y,z)\in
\mathcal{Q}_h | z<0\}$. If $h<0$ the condition is automatically
fulfilled. When, $h=0$, we are actually imposing that the vertex of the
cone does not belongs to the trajectory of the curve.
Finally, when  $h>0$, this imposes that
the trajectory lies either in the upper or in the lower parts of the
one sheet hyperboloid.

Denoting by $(E_1,...,E_4):\R\to \mathrm{Sp}(4,\R)$ the symplectic Frenet frame along $\gamma$ we define
$$f(s,\vartheta)=\gamma(s)+x(\vartheta)E_2(s)+\frac{1}{\kappa_3^2-\kappa_4}(\sqrt{\kappa_3^2-\kappa_4}z(\vartheta)-1)E_3(s)+y(\vartheta)E_4(s)\in
\R^4,$$
where $x(\vartheta),y(\vartheta)$ and $z(\vartheta)$ are the
components of $\alpha$. Geometrically, our surface is a sort of
molding surface with directrix curve $\gamma$ and profile $\alpha$.

 By
construction, $f$ is a doubly periodic map and its lattice of periods
$L_{(\gamma,\alpha)}$ is generated by $(\ell_{\gamma},0)$ and
$(0,\ell_{\alpha})$, where $\ell_{\alpha}$ is the Euclidean length of
$\alpha$. Thus, $f$ induces a smooth map
$\tilde{f}:\R^2/L_{(\gamma,\alpha)}\to \R^4$. Using the Serret-Frenet
equations satisfied by the symplectic frame we obtain
$$(\partial_s f)|_{(s,\vartheta)}=\sqrt{\kappa_3^2-\kappa_4}z(\vartheta)E_1(s)-\kappa_3y(\vartheta)E_2(s)-y(\vartheta)E_3(s)+x(\vartheta)E_4(s).$$
and
$$(\partial_{\vartheta} f)|_{(s,\vartheta)} = \dot{x}(\vartheta)E_2(s)+\frac{1}{\sqrt{\kappa_3^2-\kappa_4}}\dot{z}(\vartheta)E_3(s)+\dot{y}(\vartheta)E_4(s),$$
where $\dot{x},\dot{y}$ and $\dot{z}$ are the derivatives with respect
to the parameter $\vartheta$. Since $z(\vartheta)\neq 0$
and
$\dot{x}(\vartheta)^2+\dot{y}(\vartheta)^2+\dot{z}(\vartheta)^2>0$,
for every $\vartheta$, the maps $f$ and $\tilde{f}$ are smooth
immersions. Moreover, the two previous equations also imply
$$\begin{array}{ll}
\Lambda(\partial_s f,\partial_{\vartheta} f)|_{(s,\vartheta)}
&
\ds =-x(\vartheta)\dot{x}(\vartheta)-\kappa_3y(\vartheta)\dot{y}(\vartheta)+z(\vartheta)\dot{z}(\vartheta)
\\
& \ds
=-\frac{1}{2}\frac{d}{d\vartheta}\left(x^2+\kappa_3y^2-z^2\right)|_{\vartheta}=0.
\end{array}$$
This shows that $f$ and $\tilde{f}$ are Lagrangian immersions.

\section{Lagrangian geodesics}
    \label{geodesics} 

In this section we introduce a concept of geodesics for Lagrangian
curves. Interestingly, they form a subset of the Lagrangian curves with
constant curvatures.

\begin{defy}
Let $\mathcal{L}$ be the space of linearly full Lagrangian curves in $\R^4$.
By a {\it Lagrangian variation} of $\gamma \in \mathcal{L}$ we mean a mapping $\Gamma : I \times (-\epsilon, \epsilon)
\to \R^4$ such that $\gamma_u : = \Gamma(-,u) : I \to \R^4$ is a linearly full Lagrangian curve, $\forall u \in (-\epsilon,
\epsilon)$ and $\Gamma(t,0)=\gamma(t)$, for every $t\in I$. The infinitesimal variation of $\Gamma$ is the vector field along $\gamma$ defined by
\[\mathfrak{v}:t\in I\to \partial_u\Gamma|_{(t,0)}\in \R^4 . \]
If $\mathfrak{v}$ vanishes outside a closed interval, then $\Gamma$ is said to be compactly supported.\end{defy}

\begin{defy}
A curve $\gamma\in \mathcal{L}$ is said to be a
\textit{Lagrangian  geodesic}
if it is a critical point of the {\it symplectic arclength functional}
\begin{equation}\label{action}
\ell : \gamma \in \mathcal{L} \mapsto
  \int \sigma_{\gamma}\in \R,
    \end{equation}
when one considers compactly supported variations.
\end{defy}

Accordingly, a curve $\gamma \in \mathcal{L}$ is a Lagrangian geodesic
if, for every compactly supported variation $\Gamma$, we have that
$$ \left. \frac{d}{du}   \left( \int_{K} \sigma_u \right)\right|_{u=0}= 0,$$
where $K$ is the smallest closed interval which contains the support
of the infinitesimal variation $\mathfrak{v}$.

\begin{defy}
Let $\gamma : I \to \R^4$ be a Lagrangian curve. A vector field
$\mathfrak{v}:I\to \R^4$ along $\gamma$ is said to be an
\emph{infinitesimal Lagrangian variation} if
$\mathfrak{v}|_t= \partial_u\Gamma|_{(t,0)}$ for some Lagrangian
variation
$\Gamma:I\times
(-\epsilon,\epsilon) \to \R^4$ of $\gamma$.
\end{defy}

The set of all infinitesimal Lagrangian
variations of $\gamma$ should be thought of as the tangent space  $T_{\gamma}(\mathcal{L})$ of
$\mathcal{L}$ at $\gamma$.

\begin{theoy}
A linearly full Lagrangian curve $\gamma$ parameterized by the symplectic arc-length is a
    Lagrangian geodesic if and only if $\kappa_3$ is a constant and $\kappa_4=\kappa_3^2$.
\end{theoy}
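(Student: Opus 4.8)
The plan is to compute the first variation of the symplectic arc-length functional $\ell$ directly, exploiting that $\ell$ is reparametrisation-invariant so that we may take $\gamma$ parametrised by symplectic arc-length, i.e. $\kappa_2\equiv 1$. For a variation field $\mathfrak{v}$ along $\gamma$ — not required a priori to be tangent to $\mathcal{L}$ — I would differentiate $\kappa_2=\Lambda(\gamma'',\gamma''')$ and use that mixed partials commute to get $\delta\kappa_2=\Lambda(\mathfrak{v}'',\gamma''')+\Lambda(\gamma'',\mathfrak{v}''')$, whence
\[ \delta\ell=\frac{1}{5}\int \delta\kappa_2\,ds . \]
Since $J$ is constant, $\tfrac{d}{ds}\Lambda(\cdot,\cdot)$ is a derivation, so repeated integration by parts moves all derivatives onto $\gamma$; the boundary terms vanish because $\mathfrak{v}$ is compactly supported, and both summands collapse to the same expression, giving
\[ \delta\ell=\frac{2}{5}\int \Lambda(\mathfrak{v},\gamma^{(5)})\,ds . \]

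Next I would express everything in the symplectic Frenet frame $(E_1,\dots,E_4)$ of \secr{remarkable}. Using the Serret–Frenet equations with $\kappa_2=1$ — so that the fifth-order invariant reduces to $\tau=\kappa_4-\kappa_3^2$ — one propagates $\gamma'=E_1$, $\gamma''=E_2$ up through
\[ \gamma^{(5)}=\tau\,E_1-\kappa_3'\,E_2-\kappa_3\,E_4 . \]
Writing $\mathfrak{v}=f_1E_1+f_2E_2+f_3E_3+f_4E_4$ and pairing with $\Lambda$ then yields
\[ \Lambda(\mathfrak{v},\gamma^{(5)})=-\kappa_3 f_2-\tau f_3+\kappa_3' f_4 , \]
in which the tangential component $f_1$ has dropped out, reflecting the reparametrisation invariance of $\ell$.

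The constraint is the structural crux. The admissible fields are the infinitesimal Lagrangian variations, i.e. those for which $\delta\big(\Lambda(\gamma',\gamma'')\big)=\Lambda(\mathfrak{v}',\gamma'')+\Lambda(\gamma',\mathfrak{v}'')=0$. Expanding $\mathfrak{v}'$ and $\mathfrak{v}''$ in the frame, this linear condition becomes, in components,
\[ f_3''-3f_4'-2f_2=0,\qquad\text{i.e.}\qquad f_2=\tfrac12\big(f_3''-3f_4'\big), \]
while $f_1,f_3,f_4$ stay free. Substituting into $\delta\ell$ and integrating by parts once more to clear the derivatives off $f_3$ and $f_4$ gives
\[ \delta\ell=\frac{2}{5}\int\Big[\big(-\tau-\tfrac12\kappa_3''\big)f_3-\tfrac12\kappa_3'\,f_4\Big]\,ds . \]
By the fundamental lemma of the calculus of variations, $\gamma$ is a Lagrangian geodesic if and only if both coefficients vanish: $\kappa_3'=0$ and $\tau+\tfrac12\kappa_3''=0$. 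Then $\kappa_3$ is constant, so $\kappa_3''=0$ and $\tau=0$; since $\tau=\kappa_4-\kappa_3^2$, this is exactly $\kappa_3$ constant and $\kappa_4=\kappa_3^2$. The converse is immediate from the same formula.

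The main obstacle I anticipate is the correct identification of the tangent space $T_\gamma\mathcal{L}$: one must check that the linearised Lagrangian condition is the \emph{only} restriction on $\mathfrak{v}$, that $f_3$ and $f_4$ are therefore genuinely arbitrary, and — for full rigour — that every such infinitesimal variation integrates to an honest compactly supported Lagrangian variation $\Gamma$. The remaining delicate-but-routine points are the reduction $\tau=\kappa_4-\kappa_3^2$ along an arc-length Lagrangian curve and the frame bookkeeping behind $\gamma^{(5)}$ and the constraint; once these are secured, the Euler–Lagrange analysis is immediate.
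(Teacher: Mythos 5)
Your computations are correct and they reproduce the paper's Euler--Lagrange equations, but by a genuinely more direct route. The paper never differentiates $\kappa_2=\Lambda(\gamma'',\gamma''')$ in $\R^4$; instead it introduces the two-parameter frame $\mathcal{F}(s,u)$ along the variation, writes $\mathcal{F}^{-1}d\mathcal{F}=\mathcal{K}\,ds+\mathcal{P}\,du$, and extracts both the admissibility constraint and the first variation from the flatness equation $\partial_u\mathcal{K}-\partial_s\mathcal{P}=[\mathcal{K},\mathcal{P}]$. Your shortcut --- $\delta\kappa_2=\Lambda(\mathfrak{v}'',\gamma''')+\Lambda(\gamma'',\mathfrak{v}''')$, two and three integrations by parts collapsing to $\tfrac{2}{5}\int\Lambda(\mathfrak{v},\gamma^{(5)})\,ds$, then reading off components in the Frenet frame --- avoids all of that machinery, and every intermediate identity checks out: $\gamma^{(5)}=\tau E_1-\kappa_3'E_2-\kappa_3E_4$ with $\tau=\kappa_4-\kappa_3^2$, the pairing $\Lambda(\mathfrak{v},\gamma^{(5)})=-\kappa_3f_2-\tau f_3+\kappa_3'f_4$, and the linearized Lagrangian condition $f_2=\tfrac12(f_3''-3f_4')$, which is exactly the paper's first Lemma. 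Your final integrand agrees with the paper's $\int_K\bigl((2k_2+k_1'')v_3+k_1'v_4\bigr)ds$ (with $k_1=-\kappa_3$, $k_2=-\tau$) up to an overall positive constant, which is immaterial for the Euler--Lagrange system, and the passage from $\kappa_3'=0$, $\tau+\tfrac12\kappa_3''=0$ to the stated conditions is the same in both arguments.

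The one genuine gap is precisely the point you flag at the end and then leave open: the surjectivity of the linearization. For the ``only if'' direction you must feed the test fields $(0,-\tfrac32 v_4',0,v_4)$ and $(0,\tfrac12 v_3'',v_3,0)$ into $\delta\ell=0$, and this is legitimate only if every compactly supported solution of $f_2=\tfrac12(f_3''-3f_4')$ is the infinitesimal variation of an \emph{actual} compactly supported Lagrangian variation $\Gamma$; otherwise the fundamental lemma cannot be invoked and you obtain only the sufficiency half of the theorem. This is not a routine verification: it is the bulk of the proof of the paper's first Lemma, where, given such $(v_1,\dots,v_4)$, one constructs a $\mathfrak{s}(4,\R)$-valued one-form $\Omega=\mathcal{K}\,ds+\mathcal{P}\,du$ satisfying $d\Omega=-\Omega\wedge\Omega$ (by solving an ODE for $\mathcal{P}$ along $u=0$ and propagating) and then integrates it to a map into $\mathcal{S}(4,\R)$ via Theorem~\ref{fundamental}, recovering a Lagrangian variation whose infinitesimal variation is the prescribed field. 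To complete your proof you would need to supply this integrability argument (or an equivalent one, e.g.\ by explicitly exhibiting a family $\Gamma(s,u)$ whose $u$-derivative at $u=0$ is $\mathfrak{v}$ and which satisfies $\Lambda(\partial_s\Gamma,\partial_s^2\Gamma)=0$ to all orders in $s$, not just at $u=0$).
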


Consequently geodesics are of  Type II or IV in the classification
of Section~\ref{classif}.

For simplification we introduce $k_1=-\kappa_3$ and
$k_2=\kappa_3^2-\kappa_4$ as the symplectic curvatures of a Lagrangian
curve when parameterized by symplectic arc-length.
As preliminaries to the  proof
we first characterize the infinitesimal variations of a linearly full
Lagrangian curve.
We then derive the Euler-Lagrange equations of the symplectic arc-length.
\begin{lemy}
Let $\gamma:I\to \R^4$ be a linearly full Lagrangian curve parameterized by symplectic arc-length. A vector field
$$\mathfrak{v} : s\in I \to (v_1(s),...,v_4(s))\in \R^4$$
along $\gamma$ is an infinitesimal Lagrangian variation if and only if
\(\label{INFV}v_2=\frac{1}{2}(v_3'' - 3v_4').\)
\end{lemy}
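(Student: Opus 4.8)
The plan is to read the claim as the assertion that $\mathfrak{v}$ is tangent to the locus of Lagrangian curves, and to treat the two implications through the scalar Lagrangian-defect functional $\mathcal{F}(\beta)=\Lambda(\beta',\beta'')$, whose zero set is exactly the Lagrangian curves. Expanding $\mathfrak{v}=v_1E_1+v_2E_2+v_3E_3+v_4E_4$ in the symplectic Frenet frame of $\gamma$ (so that $E_1=\gamma'$, $E_2=\gamma''$ and the frame obeys the Serret--Frenet system of Section~\ref{remarkable} with $\kappa_2=1$), both directions reduce to understanding the linearization $D\mathcal{F}_\gamma(\mathfrak{v})=\Lambda(\mathfrak{v}',\gamma'')+\Lambda(\gamma',\mathfrak{v}'')$.

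For the ``only if'' implication I would differentiate the identity $\Lambda(\partial_s\gamma_u,\partial_s^2\gamma_u)\equiv 0$ in $u$ at $u=0$, which yields $D\mathcal{F}_\gamma(\mathfrak{v})=0$. The core is then a direct computation: using $E_1'=E_2$, $E_2'=E_4$, $E_3'=(\kappa_3^2-\kappa_4)E_1$ and $E_4'=-\kappa_3E_2-E_3$ one finds that the $E_4$-component of $\mathfrak{v}'$ is $v_2+v_4'$ and the $E_3$-component of $\mathfrak{v}''$ is $v_3''-2v_4'-v_2$. Since the frame is symplectic, the only surviving pairings are $\Lambda(E_4,E_2)=-1$ and $\Lambda(E_1,E_3)=1$, so $D\mathcal{F}_\gamma(\mathfrak{v})=-(v_2+v_4')+(v_3''-2v_4'-v_2)=v_3''-3v_4'-2v_2$. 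Setting this to zero gives $v_2=\tfrac12(v_3''-3v_4')$.

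For the converse I must promote a kernel element to an honest Lagrangian variation. The useful structural fact, visible from the formula just derived, is that $D\mathcal{F}_\gamma$ is onto the space of functions and admits the explicit bounded right inverse $f\mapsto -\tfrac12 f\,E_2$ (indeed $D\mathcal{F}_\gamma(\delta E_2)=-2\delta$). I would therefore seek a variation $\Gamma(s,u)=\gamma(s)+u\,\mathfrak{v}(s)+c(s,u)\,E_\bullet(s)$ with $c(\cdot,0)=0$, and solve the residual equation $\mathcal{F}(\Gamma(\cdot,u))=0$ for the correction $c$ by the implicit function theorem. Because $D\mathcal{F}_\gamma(\mathfrak{v})=0$ by hypothesis, the first-order term in $u$ of the residual vanishes, forcing $\partial_u c|_{u=0}=0$; hence $\partial_u\Gamma|_{u=0}=\mathfrak{v}$, each $\gamma_u$ is Lagrangian, and linear fullness persists since it is an open condition.

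I expect the genuine obstacle to lie entirely in this converse, not in the computation. The naive choice $E_\bullet=E_2$ makes the linearization algebraic and invertible but turns the full nonlinear constraint into a second-order equation in $c$ whose leading coefficient $\Lambda(\partial_s\Gamma,E_2)=-c+u\,\Lambda(\mathfrak{v}',E_2)$ degenerates at the base point; I would instead correct transversally with $E_\bullet=E_3$, for which $\Lambda(\partial_s\Gamma,E_3)=1+O(u,c)$ is nondegenerate and $\mathcal{F}(\Gamma(\cdot,u))=0$ becomes a regular second-order ODE in $c$, whose first-order-in-$u$ part satisfies $(\partial_u c|_0)''=0$, so with vanishing initial data $\partial_u c|_0\equiv 0$ and $c=O(u^2)$ as required. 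Carrying out this construction on all of $I$ (rather than merely locally, and avoiding a loss of derivatives) is the delicate point that the ``only if'' computation never encounters.
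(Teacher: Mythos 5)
Your necessity argument is correct and is genuinely more direct than the paper's. The paper extracts the relation $v_2=\tfrac12(v_3''-3v_4')$ as the first equation of the full system obtained from the structure equation $\partial_u\mathcal{K}-\partial_s\mathcal{P}=[\mathcal{K},\mathcal{P}]$ satisfied by the Maurer--Cartan form of the Frenet frame along the whole variation; you get the same relation by linearizing the single scalar defect $\Lambda(\gamma_u',\gamma_u'')\equiv 0$ and decomposing in the frame. Your bookkeeping checks out: with $E_1'=E_2$, $E_2'=E_4$, $E_3'=(\kappa_3^2-\kappa_4)E_1$, $E_4'=-\kappa_3E_2-E_3$ (which is what the Serret--Frenet matrix $A(\kappa)$ of Section~5.1 gives at $\kappa_2=1$), the $E_4$-component of $\mathfrak{v}'$ is $v_2+v_4'$, the $E_3$-component of $\mathfrak{v}''$ is $v_3''-2v_4'-v_2$, and $\Lambda(E_4,E_2)=-1$, $\Lambda(E_1,E_3)=1$ yield $v_3''-3v_4'-2v_2=0$. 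This is a clean substitute for that half of the paper's computation, and it implicitly uses the same (correct) reading of the $v_i$ as Frenet-frame components.

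The gap is in the converse, and it sits exactly where you flag it. Your correction $c(\cdot,u)$ must solve a quasilinear second-order ODE in $s$ whose leading coefficient is $1+O(u,c)$ and whose right-hand side is quadratic in $c_s$ (the term $-2(\kappa_3^2-\kappa_4)c_s^2$ already appears in the $u$-independent part of $\Lambda(\partial_s\Gamma,\partial_s^2\Gamma)$). Such equations can blow up in finite $s$-time, so initial data at $s_0$ only give existence on a subinterval, and the admissible range of $u$ shrinks with the compact piece of $I$ on which you demand the solution. Since a Lagrangian variation in the paper's sense is a map $I\times(-\epsilon,\epsilon)\to\R^4$ with one $\epsilon$ valid on all of $I$, the ``delicate point'' you defer is the actual content of the sufficiency direction, not a technical afterthought. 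The paper avoids it by keeping everything linear: from $(v_1,\dots,v_4)$ it manufactures the full $\mathfrak{s}(4,\R)$-valued datum $\mathcal{P}$ as the solution of a linear ODE in $s$ (hence global on $I$ for every value of the deformation parameter), checks the integrability condition $d\Omega=-\Omega\wedge\Omega$ by construction, and integrates the Maurer--Cartan system over the simply connected $I\times\R$; the resulting curves are automatically Lagrangian because $\gamma_u'=E_1$ and $\gamma_u''=E_2$ for a symplectic frame. To complete your version you would need either a no-blow-up argument for your ODE uniformly on $I$ for small $u$, or to replace the pointwise correction by a linear construction of the above type.
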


\begin{proof}{Suppose that $\mathfrak{v}$ is the infinitesimal variation induced by
the Lagrangian variation $\Gamma(s,u)$ of $\gamma$.
Possibly restricting the interval $(-\epsilon,\epsilon)$,
we can assume that all the curves $\gamma_u : s\in I\to \Gamma(s,u)$
are linearly-full.
Let
$$\mathcal{F}:I\times (-\epsilon,\epsilon)\to \mathcal{S}(4,\R)$$
be the Frenet frame along the variation, i.e. the map which
associate to each $(s,u)$ the symplectic Frenet frame of $\gamma_u$ at
the point $\gamma_u(s)$.  If we set $\Omega = \mathcal{F}^{-1}d\mathcal{F}$, then
$$\Omega = \mathcal{K}(s,u)ds + \mathcal{P}(s,u)du,$$
for $\mathfrak{s}(4,\R)$-valued functions
$$\mathcal{K}=\left(
      \begin{array}{ccccc}
        0 & 0 & 0 & 0 & 0 \\
        1 & 1 & 0 & \mathcal{K}_2 & 0 \\
        0 & 0 & 0 & 0 & \mathcal{K}_1 \\
        0 & 0 & 0 & 0 & -1 \\
        0 & 0 & \mathcal{U} & 0 & 0 \\
      \end{array}
    \right),\quad \mathcal{P} = \left(
                        \begin{array}{ccccc}
                          0 & 0 & 0 & 0 & 0 \\
                          \mathcal{V}_1 & \mathcal{A}_1 & \mathcal{A}_2 & \mathcal{B}_1 & \mathcal{B}_2 \\
                          \mathcal{V}_2 & \mathcal{A}_3 & \mathcal{A}_4 & \mathcal{B}_2 & \mathcal{B}_3 \\
                          \mathcal{V}_3 & \mathcal{C}_1 & \mathcal{C}_2 & -\mathcal{A}_1  & -\mathcal{A}_2 \\
                          \mathcal{V}_4 & \mathcal{C}_2 & \mathcal{C}_3 & -\mathcal{A}_3 & - \mathcal{A}_4  \\
                        \end{array}
                      \right),
$$
 such that
\begin{equation}\label{CData}\mathcal{U}(s,0)=1,\quad
\mathcal{V}_j(s,0)=v_j(s),\quad \mathcal{K}_1(s,0)=k_1(s),\quad
\mathcal{K}_2(s,0)=k_2(s),
\end{equation}
where $k_1(s)$ and $k_2(s)$ are the symplectic curvatures at $\gamma(s)$.
By construction, $\Omega$ satisfies
\begin{equation}\label{MCC1} d\Omega=-\Omega\wedge \Omega.
\end{equation}
Equation (\ref{MCC1}) can be rewritten in the form
\begin{equation}\label{MCC2} \frac{\partial \mathcal{K}}{\partial u}- \frac{\partial \mathcal{P}}{\partial s}=[\mathcal{K},\mathcal{P}].
\end{equation}
In turn, (\ref{MCC2}) is equivalent to the following system of equations
\begin{equation}\label{MCC3}
 \left\{\begin{array}{llll}
\mathcal{V}_2=\mathcal{U}^{-1}(\mathcal{V}_3^{(2,0)}-3\mathcal{V}_4^{(1,0)})/2,\\
\mathcal{A}_J=\sum_{m=1}^{4}\sum_{h=1}^{4}\mathcal{U}^{-r_1(J,m,h)}A^m_{J,h}(j_s(\mathcal{U}),j_s(\mathcal{K}_1),j_s(\mathcal{K}_2))\mathcal{V}_m^{(h,0)},\\
\mathcal{C}_p=\sum_{m=1}^{4}\sum_{h=1}^{3} \mathcal{U}^{-r_2(p,m,h)}C^m_{p,h}(j_s(\mathcal{U}),j_s(\mathcal{K}_1),j_s(\mathcal{K}_2))\mathcal{V}_m^{(h,0)},\\
\mathcal{B}_p=\sum_{m=1}^{4}\sum_{h=1}^{5} \mathcal{U}^{-r_3(p,m,h)}B^m_{p,h}(j_s(\mathcal{U}),j_s(\mathcal{K}_1),j_s(\mathcal{K}_2))\mathcal{V}_m^{(h,0)},\\
\mathcal{U}^{(0,1)}=\sum_{m=1}^{4}\sum_{h=1}^{4} \mathcal{U}^{-\tilde{r}(m,h)}U^m_{h}(j_s(\mathcal{U}),j_s(\mathcal{K}_1),j_s(\mathcal{K}_2))\mathcal{V}_m^{(h,0)},\\
\mathcal{K}_1^{(0,1)}=\sum_{m=1}^{4}\sum_{h=1}^{6} \mathcal{U}^{-\tilde{r}_1(m,h)}K^m_{1,h}(j_s(\mathcal{U}),j_s(\mathcal{K}_1),j_s(\mathcal{K}_2))\mathcal{V}_m^{(h,0)},\\
\mathcal{K}_2^{(0,1)}=\sum_{m=1}^{4}\sum_{h=1}^{5} \mathcal{U}^{-\tilde{r}_2(m,h)}K^m_{2,h}(j_s(\mathcal{U}),j_s(\mathcal{K}_1),j_s(\mathcal{K}_2))\mathcal{V}_m^{(h,0)},
\end{array}\right.
\end{equation}
where $A^m_{J,h},C^m_{p,h},B^m_{p,h},U^m_{p,h},K^m_{1,h},K^m_{2,h}$ are suitable polynomial functions, $j_s(f)$ and }is the jet with respect to the variable $s$ of a function $f(s,u)$, $f^{(h,k)}=\partial^h_s \partial^k_u f$ and the exponents $r_1(J,m,h)$, $r_2(p,m,h)$, $r_3(p,m,h)$, $\tilde{r}(m,h)$, $\tilde{r}_1(m,h)$, $\tilde{r}_2(m,h)$ are non-negative integers. Note that all these quantities can be calculated explicitly. However, the only explicit formula that will be useful in the following is the derivative of the function $\mathcal{U}$ with respect to the parameter $u$, which can be written in the form

\begin{equation}\label{MCC4}
\begin{array}{llll} \mathcal{U}^{(0,1)} = & \mathcal{U}^{(1,0)} \mathcal{V}_1 +5\mathcal{U} \mathcal{V}_1^{(1,0)}+2\mathcal{U} \mathcal{K}_2\mathcal{V}_3+\\
&\left(\mathcal{U}\mathcal{K}_1 + \frac{5}{2}\left(\frac{\mathcal{U}^{(1,0)} }{\mathcal{U}}\right)^2-\frac{3}{2}\frac{\mathcal{U}^{(2,0)}}{\mathcal{U}}\right)\mathcal{V}_3^{(2,0)}
-\frac{5}{2}\frac{\mathcal{U}^{(1,0)}}{\mathcal{U}}\mathcal{V}_3^{(3,0)}+\\
&2 \mathcal{V}_3^{(4,0)}+\left(\mathcal{K}_1 \mathcal{U}^{(1,0)}+3 \mathcal{U}\mathcal{K}_1^{(1,0)}\right)\mathcal{V}_4+\\
&\left(2 \mathcal{U}
\mathcal{K}_1 -\frac{15}{2}\left(\frac{\mathcal{U}^{(1,0)}}{\mathcal{U}}\right)^2+\frac{9}{2}\frac{\mathcal{U}^{(2,0)}}{ \mathcal{U}}\right)\mathcal{V}_4^{(1,0)}+\\
&\frac{15}{2}\frac{
\mathcal{U}^{(1,0)} }{\mathcal{U}}\mathcal{V}_4^{(2,0)}-5 \mathcal{V}_4^{(3,0)}
\end{array}.
\end{equation}
Keeping in mind that $\mathcal{V}_m(s,0)=v_m(0)$, $m=1,...,4$, and
$\mathcal{U}(s,0)=1$, the first formula in (\ref{MCC3}) implies
$v_2=\frac{1}{2}(v_3'' - 3v_4')$. Conversely, let us consider four
real-valued smooth functions $v_1,..,v_4$ defined on the open interval
$I$ such that $v_2=\frac{1}{2}(v_3'' - 3v_4')$. We let $A_J$,
$J=1,..,4$, $B_p$ and $C_p$, $p=1,2,3$, be the functions form $I$ to
$\R$ obtained by placing $\mathcal{U}=1$, $\mathcal{K}_1=k_1$ and
$\mathcal{K}_2=k_2$
in the right hand side of the second, third and fourth equations of
(\ref{MCC3}). Similarly, we let $\dot{u}$,$\dot{k}_1$ and $\dot{k}_2$
be the function defined by putting $\mathcal{U}=1$,
$\mathcal{K}_1=k_1$ and $\mathcal{K}_2=k_2$ in the right hand side of
the last three equations of (\ref{MCC3}). Next we consider the
functions
$K,P,Q:I\to \mathfrak{s}(4,\R)$ defined by
\begin{equation}\label{MCC4.I}
K=\left(
      \begin{array}{ccccc}
        0 & 0 & 0 & 0 & 0 \\
        1 & 1 & 0 & k_2 & 0 \\
        0 & 0 & 0 & 0 & k_1 \\
        0 & 0 & 0 & 0 & -1 \\
        0 & 0 & 1 & 0 & 0 \\
      \end{array}
    \right),\quad  P = \left(
                        \begin{array}{ccccc}
                          0 & 0 & 0 & 0 & 0 \\
                          v_1 & A_1 & A_2 & B_1 & B_2 \\
                          v_2 & A_3 & A_4 & B_2 & B_3 \\
                          v_3 & C_1 & C_2 & -A_1  & -A_2 \\
                          v_4 & C_2 & C_3 & -A_3 & - A_4  \\
                        \end{array}
                      \right),
\end{equation}
and by
$$Q=\left(
      \begin{array}{ccccc}
        0 & 0 & 0 & 0 & 0 \\
        0 & 0 & 0 & \dot{k}_2 & 0 \\
        0 & 0 & 0 & 0 & \dot{k}_1 \\
        0 & 0 & 0 & 0 & 0 \\
        0 & 0 & \dot{u} & 0 & 0 \\
      \end{array}
    \right).
$$
By construction, $P$ is a solution of the o.d.e
$$\frac{dP}{ds}=Q-[K,P].$$
Subsequently, we let $\mathcal{K}$ be the map
\begin{equation}\label{MCC5}\mathcal{K}:(s,t)\in I\times \R\to K(s)+tQ(s),\end{equation}
 and we let $\mathcal{P}:I\times \R\to \mathfrak{s}(4,\R)$ be the solution of the equation
\begin{equation}\label{MCC6}\frac{\partial \mathcal{P}}{\partial s}=Q-[K,\mathcal{P}]-t[Q,\mathcal{P}],\quad \mathcal{P}(s,0)=P(s_0),\end{equation}
where $s_0$ is an element of $I$. The maps $P$ and $\mathcal{P}(-,0)$ are solutions of the same o.d.e with the same Cauchy data $P(s_0)=\mathcal{P}(s_0,0)$. From this we infer that $P(s)=\mathcal{P}(s,0)$, for every $s\in I$. Then, we consider the $\mathfrak{s}(4,\R)$-valued $1$-form
\begin{equation}\label{MCC7}\Omega = \mathcal{K}ds+\mathcal{P}du\in \Omega^1(I\times \R)\otimes \mathfrak{s}(4,\R).
\end{equation}
From (\ref{MCC5}) and (\ref{MCC6}) it follows that
\begin{equation}\label{MCC8}d\Omega = -\Omega\wedge \Omega.\end{equation}
This implies the existence of a smooth map
\begin{equation}\label{MCC9}\mathcal{F}=(\Gamma,\mathbf{E}):I\times \R\to \mathcal{S}(4,\R)\cong \R^4\times \mathrm{Sp}(4,\R)\end{equation}
such that
\begin{equation}\label{MCC10}\mathcal{F}^{-1}d\mathcal{F}=\Omega,\quad \mathcal{F}(s_0,0)=(\gamma(s_0),\mathbf{E}_0),\end{equation}
where $\mathbf{E}_0\in \mathrm{Sp}(4,\R)$ is the symplectic Frenet frame $\mathbf{E}_{\gamma}$ of $\gamma$ evaluated at $s_0$. From (\ref{MCC4.I}), (\ref{MCC5}), (\ref{MCC7}) and (\ref{MCC10}) we see that the map
$$(\tilde{\gamma},\tilde{\mathbf{E}}):s\in I \to \mathcal{F}(s,0)=(\Gamma(s,0),\mathbf{E}(s,0))\in \mathcal{S}(4,\R)$$
satisfies
\begin{equation}\label{MCC11}\tilde{\gamma}'=\mathbf{E}_1,\quad \tilde{\mathbf{E}}' = \tilde{\mathbf{E}}\cdot K,\end{equation}
with the Cauchy data $\tilde{\gamma}(s_0)=\gamma(s_0)$ and $\tilde{\mathbf{E}}(s_0)=\mathbf{F}_{\gamma}(s_0)$. On the other hand, (\ref{MCC11}) is just the Frenet system satisfied by the canonical moving frame along $\gamma$. From this we deduce that $$\Gamma(s,0)=\gamma(s),\quad \mathbf{E}(s,0)=\mathbf{F}_{\gamma}(s),\quad \forall s\in I.$$
This means that $\Gamma$ is a Lagrangian variation  of $\gamma$. Furthermore, using once again (\ref{MCC4.I}), (\ref{MCC5}), (\ref{MCC7}) and (\ref{MCC10}) we see that $\frac{\partial \Gamma}{\partial u}|_{(s,0)}=\mathfrak{v}$. We have thus constructed a Lagrangian variation of $\gamma$ that has $\mathfrak{v}$ as its infinitesimal variation. This concludes the proof of the Lemma.
\end{proof}

\begin{lemy}
Let $\Gamma:I\times (-\epsilon,\epsilon)\to \R^4$ be a compactly
supported variation of a linearly full Lagrangian curve $\gamma :I\to
\R^4$   parameterized by the symplectic arc-length and let
$\mathfrak{v}:I\to \R^4$ be its infinitesimal variation. Then,
\begin{equation}\label{EL1}
\frac{d}{du}\left(\int_K \sigma_u\right)|_0 = \int_K \left((2k_2(s)+k_1''(s))v_3(s)+k_1'(s)v_4(s)\right)ds,
\end{equation}
where $v_1,...,v_4$ are the components of $\mathfrak{v}$, $k_1, k_2$ are the symplectic curvatures of $\gamma$, $\sigma_u$ is the symplectic arc-element of the Lagrangian curves $\gamma_u$ swept out by
the variation and $K\subset I$ is the smallest closed interval
containing the support of $\mathfrak{v}$.
\end{lemy}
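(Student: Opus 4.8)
The plan is to reduce the first variation of the symplectic arclength to a single integral of $\mathcal{U}^{(0,1)}$ evaluated at $u=0$, and then to integrate by parts. First I would record that the symplectic arc-element of $\gamma_u$, expressed in the fixed parameter $s$, is its arclength density, i.e. $\sigma_u=\mathcal{U}(s,u)\,ds$, where $\mathcal{U}$ is the entry of the connection matrix $\mathcal{K}$ introduced in the proof of the previous Lemma, normalised by $\mathcal{U}(s,0)=1$ because $\gamma=\gamma_0$ is parameterised by symplectic arc-length (cf. (\ref{CData})). Consequently the functional (\ref{action}) restricted to the variation reads $\int_K\sigma_u=\int_K\mathcal{U}(s,u)\,ds$, and differentiating under the integral sign yields
\[ \frac{d}{du}\left(\int_K\sigma_u\right)\Big|_0=\int_K\mathcal{U}^{(0,1)}(s,0)\,ds. \]
Thus the entire computation is transferred onto the single scalar $\mathcal{U}^{(0,1)}$, for which the proof of the previous Lemma already supplies the closed expression (\ref{MCC4}).

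Next I would specialise (\ref{MCC4}) to $u=0$. Since $\mathcal{U}(\cdot,0)\equiv 1$, every $s$-derivative of $\mathcal{U}$ vanishes there, so $\mathcal{U}^{(1,0)}=\mathcal{U}^{(2,0)}=0$ at $u=0$; together with $\mathcal{K}_1(s,0)=k_1$, $\mathcal{K}_2(s,0)=k_2$ and $\mathcal{V}_j(s,0)=v_j$ (again from (\ref{CData})), the many terms of (\ref{MCC4}) collapse to
\[ \mathcal{U}^{(0,1)}(s,0)=5v_1'+2k_2v_3+k_1v_3''+2v_3^{(4)}+3k_1'v_4+2k_1v_4'-5v_4^{(3)}. \]
This is the only place where the explicit shape of (\ref{MCC4}) is used; note that the admissibility constraint $v_2=\tfrac12(v_3''-3v_4')$ from the previous Lemma plays no role in the evaluation, consistent with the fact that (\ref{MCC4}) contains no $\mathcal{V}_2$.

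Finally I would integrate this expression over $K$ and integrate by parts. Because $\Gamma$ is compactly supported, $\mathfrak{v}$ and all its $s$-derivatives vanish on $\partial K$, so every exact term drops out: the three divergences $5v_1'$, $2v_3^{(4)}$ and $-5v_4^{(3)}$ integrate to zero, while $\int_Kk_1v_3''\,ds=\int_Kk_1''v_3\,ds$ and $\int_K2k_1v_4'\,ds=-\int_K2k_1'v_4\,ds$. Collecting the surviving terms leaves $\int_K\big((2k_2+k_1'')v_3+(3k_1'-2k_1')v_4\big)\,ds$, which is precisely (\ref{EL1}). The absorption of the $v_1$ contribution into a vanishing boundary term is the analytic expression of reparametrisation invariance: a purely tangential infinitesimal variation leaves the symplectic length unchanged.

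I expect the only genuine difficulty to be the first step, namely certifying that $\sigma_u=\mathcal{U}\,ds$ and that differentiation under the integral is legitimate, together with the careful bookkeeping needed to evaluate (\ref{MCC4}) at $u=0$. Everything downstream is elementary integration by parts, made clean by the compact support of $\mathfrak{v}$; the heavy structural input, namely the integrability relation $d\Omega=-\Omega\wedge\Omega$ that produced (\ref{MCC4}), has already been discharged in the previous Lemma.
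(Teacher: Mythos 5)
Your proposal is correct and follows essentially the same route as the paper: reduce the first variation to $\int_K\mathcal{U}^{(0,1)}(s,0)\,ds$, evaluate (\ref{MCC4}) at $u=0$ using (\ref{CData}), and integrate by parts using the compact support of $\mathfrak{v}$. You are in fact slightly more careful than the printed proof, which silently drops the exact term $5v_1'$ and carries a typo ($k_1v_4$ instead of $k_1'v_4$) in its last two displayed lines; your computation $\,3k_1'-2k_1'=k_1'\,$ recovers the coefficient actually stated in the Lemma.
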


\begin{proof}
We maintain the same notations that have been used in the proof of the previous lemma. Then we have
$$\frac{d}{du}\left(\int_K \sigma_u\right)|_0  = \frac{d}{du}\left(\int_K \mathcal{U}(s,u)ds\right)|_{u=0} =
\int_K\mathcal{U}^{(0,1)}(s,0)ds.
$$
From (\ref{MCC4}) and keeping in mind that
$$\mathcal{U}(s,0)=1,\quad \mathcal{K}_1(s,0)=k_1(s),\quad \mathcal{K}_2(s,0)=k_2(s)$$
we have
$$
\int_K\mathcal{U}^{(0,1)}(s,0)ds = \int_K\left(2k_2 v_3 + k_1 v_3'' + 2v_3^{(4)} + 3k_1' v_4 + 2k_1 v_4'-5v_4^{(3)} \right)ds.
$$
Taking into account that the supports of the functions $v_1,..,v_4$ are subsets of $K$ and integrating by parts, we obtain
$$
\begin{array}{llll}
&\quad \int_K\left(2k_2 v_3 + k_1 v_3'' + 2v_3^{(4)} + 3k_1' v_4 + 2k_1 v_4'-5v_4^{(3)} \right)ds =\\
&=\int_K\left(2k_2v_3+k_1v_3''+3k_1'v_4+2k_1v_4'\right)ds = \\
&=\int_K\left(2k_2v_3+(k_1v_3')'-k_1'v_3'+2(k_1v_4)'+k_1'v_4\right)ds = \\
&= \int_K\left((2k_2+k_1'')v_3+k_1v_4-(k_1'v_3)'\right)ds = \int_K\left((2k_2+k_1'')v_3+k_1v_4\right)ds
\end{array}.
$$
This yields the required result.
\end{proof}

\begin{proof} We are now in a position to prove the proposition. From the
  Lemma above we see that if $k_1$ is constant and $k_2=0$, then
  $\gamma$ is automatically a critical point of the symplectic
  arc-length functional with respect to compactly supported
  variations. Conversely, suppose that $\gamma$ is a Lagrangian
  geodesic of $\R^4$. Take any compactly supported function $v_4:I\to
  \R$, set $v_1=v_3=0$ and $v_2=-3v_4'/2$. Then, from the first Lemma
  we know that there is a Lagrangian variation of $\gamma$ whose
  infinitesimal variation is given by
  $\mathfrak{v}=(0,v_2,0,v_4)$. Using the second Lemma we obtain
$$0=\frac{d}{du}\left(\int_K \sigma_u\right)|_0 = \int_K k_1 v_4ds.$$
Since the function $v_4$ is arbitrary (provided with compact support),
this implies that $k_1'=0$, i.e. $k_1$ is a constant. Next, take any
compactly supported smooth function $v_3:I\to \R$ and set
$\mathfrak{v}=(0,v_3''/2,v_3,0)$. Thus, using again the first Lemma, we
deduce the existence of a compactly supported Lagrangian variation of
$\gamma$ having $\mathfrak{v}$ as its infinitesimal
variation. Therefore, using again the second Lemma we obtain
$$0=\frac{d}{du}\left(\int_K \sigma_u\right)|_0 = 2\int_K k_2 v_3ds.$$
On the other hand, $v_3$ can be any compactly supported smooth
function. Therefore $k_2$ vanishes identically. We have thus proved
the theorem.
\end{proof}

The result could have been inferred using a more conceptual framework,
based on the Griffiths' approach to the calculus of variations in one
independent variable \cite{Gr2, GM,MN1,MN2}.
However, this point of view require
a considerable amount of preliminary work, such as the construction of
an appropriate exterior differential system on the configuration space
$\mathcal{S}(4,\R)\times \R^2$ and the computation of the so called
Euler-Lagrange system, whose integral curves give back the critical
points of the functional.
Similarly, the Euler-Lagrange operator may be obtained in the
framework of \cite{kogan03}.

\bibliographystyle{plain}

\end{document}